\documentclass[reqno]{article}
\usepackage{amsmath,amsthm,amssymb}
\usepackage{thmtools,thm-restate}
\usepackage[margin=1in]{geometry}

\usepackage{hyperref}

\theoremstyle{plain}
\newtheorem{theorem}{Theorem}[section]

\newtheorem{corollary}[theorem]{Corollary}
\newtheorem{lemma}[theorem]{Lemma}
\newtheorem{proposition}[theorem]{Proposition}
\newtheorem*{claim*}{Claim}

\theoremstyle{definition}
\newtheorem{definition}[theorem]{Definition}

\newtheorem{example}[theorem]{Example}

\title{Fair Duels after a False Start}
\author{Tanya Khovanova and Tejo Madhavarapu}
\date{\today}

\begin{document}

\maketitle

\begin{abstract}
In the fair dueling game, two players take turns shooting at each other, with the turn order defined by the following fairness criterion: the next shot is assigned to the player who is less likely to have won the duel so far. Cooper and Dutle showed that when both players have the same probability $p$ of a successful shot, the shooting sequence converges to the Thue--Morse sequence as $p \rightarrow 0$. We extend this model by forcing the sequence to begin with an arbitrary finite prefix before reverting to this fairness criterion. We prove that this initial disruption fundamentally alters the sequence's eventual behavior: in the $p \rightarrow 0$ limit, unless the initial prefix is itself a prefix of the Thue--Morse sequence or its complement, the resulting sequence is eventually periodic. Moreover, the least period is a power of 2 that is at most $4^{n+1}$, where $n$ is the length of the prefix, and the repeating block is itself a prefix of the Thue--Morse sequence.
\end{abstract}

\section{Introduction} 

In fair division problems, a common approach is to use a greedy algorithm that corrects for imbalance at each step. Choosing the order of shots in a duel is a clean, self-contained instance of this idea.
 
Cooper and Dutle studied this duel problem in their 2013 paper~\cite{CD}, calling it a \emph{greedy Galois game} after Galois, who was killed in a duel. Two players, Alice and Bob, plan a shooting sequence according to the following rule: whoever is less likely to have won (by hitting the opponent) after the first $n$ shots gets to shoot the $(n+1)^\text{st}$ shot. If there is a tie, Alice shoots next. We call this rule the fairness criterion. Here, the shooting sequence specifies the order of potential shots; the duel itself ends as soon as one player hits the other.

For example, if Alice and Bob each have a probability of $\frac{1}{3}$ of hitting, the sequence is constructed as follows:
\begin{itemize}
    \item Alice takes the first shot since the probability of either player winning in the shots preceding this shot is 0.
    \item After the first shot, Alice's probability of having won is $\frac{1}{3}$, and Bob's is 0. This is smaller than Alice's probability, so Bob shoots next.
    \item After the second shot, Alice's probability of having won is $\frac{1}{3}$, and Bob's is $\frac{2}{9}$. Therefore, Bob shoots again.
    \item After the third shot, Alice's probability of having won is $\frac{1}{3}$, and Bob's is $\frac{10}{27}$. This is greater than Alice's probability, so Alice shoots next.
\end{itemize}
By continuing this analysis, we can compute that the shooting sequence is $ABBABABBABABAABB\dots$.

Assume that Alice and Bob have the same probability of hitting their opponent with their shot, and let this probability be $p$. Cooper and Dutle \cite{CD} show that as $p \rightarrow 0$, the sequence of turns approaches the Thue--Morse sequence, whose first several terms are $ABBABAABBAABABBA$. Throughout, limits of sequences are termwise: in particular, the statement that the shooting sequence approaches a limit as $p \rightarrow 0$ means that for each $N$, there is a threshold such that every $p$ below it yields the same first $N$ shots. These prefixes agree with one another, so together they determine a single infinite shooting sequence, the limit in question. This does not necessarily mean that there is a value of $p$ that produces the whole limiting sequence.

As recounted in \cite{CD}, the Thue--Morse sequence has a remarkable history of independent discovery: Prouhet used it in 1851 to solve what is now known as the Prouhet--Tarry--Escott problem, Thue introduced it in 1906 to construct cube-free infinite words, Morse applied it to differential geometry in 1921, and Euwe used it in 1929 to show that infinitely long chess games can exist (under the rules at the time). The sequence has since appeared in combinatorics, number theory, and music; it has also arisen as a fair ordering in a sequential tournament model~\cite{PH}. For a broader survey of appearances of the Thue--Morse sequence, see~\cite{AS}.

We extend the analysis of \cite{CD}, investigating what happens in the $p \rightarrow 0$ limit if the shooting sequence starts with a specified prefix and thereafter follows the fairness criterion. This corresponds to Alice and Bob making some finite number of mistakes when computing their shooting sequence. For example, if the shooting sequence starts with $AA$ (corresponding to Alice accidentally taking one extra shot at the beginning) and follows the fairness criterion from then on, in the $p \rightarrow 0$ limit, the shooting sequence approaches the following:
\[AAB^2(BA)^4(BAAB)^{12}(BAABABBA)^{92}(ABBABAABBAABABBA)^\infty.\]
This sequence, unlike the Thue--Morse sequence, is eventually periodic! In addition, this example has period $16$, which is the smallest possible for any such periodic sequence (as we show in Section~\ref{sec:WhenIsItPeriodic}). We explain and explore this periodicity phenomenon. The shooting sequence is eventually periodic exactly when at least one of the shots after the first is computed incorrectly (with respect to the $p \rightarrow 0$ limiting case); we make this precise in the main theorem below.

\begin{restatable}{theorem}{mainthm}
\label{thm:main}
Let $s$ be a finite sequence of $A$'s and $B$'s that is not a prefix of either the Thue--Morse sequence or its complement. Consider the shooting sequence that starts with $s$ and follows the fairness criterion from then on in the $p \rightarrow 0$ limit. This shooting sequence is eventually periodic. If $s$ consists of $n$ shots, the least period is a power of $2$ that is bounded above by $4^{n+1}$. In addition, the repeating block is a prefix of the Thue--Morse sequence.
\end{restatable}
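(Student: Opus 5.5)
The plan is to replace the $p\to 0$ fairness rule with an exact combinatorial rule on integer "moments", and then analyse the resulting greedy process phase by phase.

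\textbf{Step 1: reformulate the limit rule.} Write $q=1-p$ and $\varepsilon_i=+1$ if shot $i$ is Alice's, $-1$ if Bob's. Alice's winning probability minus Bob's after $N$ shots is $p\,f_N(q)$, where $f_N(q)=\sum_{i\le N}\varepsilon_i q^{i-1}$. Expanding $f_N$ around $q=1$, the sign of $f_N(1-p)$ for small $p$ is the sign of the first nonzero moment $d_j=\sum_{i\le N}\varepsilon_i\binom{i-1}{j}$, taken with sign $(-1)^j$. So in the limit the next shooter is decided lexicographically by $(d_0,-d_1,d_2,\dots)$, with ties going to Alice. I would first check that this reproduces the Cooper--Dutle result.

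\textbf{Step 2: phase structure after the prefix.} Let $T_m$ be the Thue--Morse prefix of length $2^m$. Prouhet's identity gives $\sum_{i\in T_m}\varepsilon_i q^{i}=\pm q^{c}\prod_{k<m}(1-q^{2^k})$. Hence a block $\pm T_m$ placed at any offset leaves $d_0,\dots,d_{m-1}$ unchanged. It adds to $d_m$ a constant $\pm K_m$ that does not depend on the offset, namely $K_m=2^{m(m-1)/2}$ up to sign.

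The inductive invariant for ``phase $m$'' is: at a block boundary, $d_0=\dots=d_{m-1}=0$ and $d_m\neq 0$. I would show by induction on position that the greedy rule then emits a whole block $\pm T_m$, and that its sign is chosen to push $d_m$ toward $0$. The reason is that inside the block the decision is made by lower moments that restart at zero and follow the Thue--Morse recursion.

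Phase $m$ therefore repeats one block and lowers $|d_m|$ by $K_m$ each time, with three possible outcomes:
\begin{itemize}
\item $d_m$ reaches exactly $0$. Then we pass to phase $m+1$, or further if $d_{m+1}=0$ as well.
\item $d_m$ jumps past $0$. Then it oscillates between two values of opposite sign, and the blocks alternate $T_m,\overline{T_m},T_m,\dots$. This is $T_{m+1}$ repeated forever, provided the oscillating $d_m$ keeps dominating $d_{m+1}$, and this must be verified.
\item The exact-hit case combined with further ties, which is absorbed into the first outcome by passing several phases at once.
\end{itemize}
The example $AAB^2(BA)^4(BAAB)^{12}\cdots$ fits this picture, which I would use as a check. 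In the second outcome the sequence is eventually periodic with least period $2^{m+1}$, and the repeating block is a Thue--Morse prefix. Before the phase structure kicks in there may be a transient: the first few shots after $s$ (``catching up'') settle into phase $m_0$ for the smallest index with $d_{m_0}\neq0$.

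\textbf{Step 3: phases must stop climbing.} This is the main obstacle. I need to show that exact hits $d_m\equiv 0 \pmod{K_m}$, with the residue also correct at each subsequent phase, cannot continue beyond $m\approx 2n+1$ unless $s$ is a Thue--Morse prefix or its complement. The contribution of all the blocks to $d_m$ is divisible by $K_m$. So the question is whether the prefix's own contribution (together with the bounded transient) to the sequence of moments is divisible by rapidly growing powers of $2$ at every stage.

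I would try a $2$-adic argument. The generating polynomial of the whole sequence so far is $f=g_s+h$, where $g_s$ comes from $s$ (degree $<n$) and $h$ is a sum of shifted Prouhet products. Reaching phase $M$ means $(1-q)^M$ divides $f$ at the boundary. Reducing modulo $2$ and using $\prod_{k<m}(1-q^{2^k})\equiv(1-q)^{2^m-1}$, this forces $g_s$ to agree with the corresponding Thue--Morse initial segment. That agreement can only persist to depth about $\log_2$ of a bound polynomial in $2^{n}$ unless $s$ is genuinely a Thue--Morse prefix. Tracking the constants should give $m+1\le 2n+2$, i.e. least period at most $2^{2n+2}=4^{n+1}$.

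The converse direction is straightforward: if $s$ is a prefix of Thue--Morse or its complement, every $d_m$ hits $0$ exactly and the sequence is Thue--Morse, which is not eventually periodic. That case is excluded in the statement, so Step 3 gives the theorem.
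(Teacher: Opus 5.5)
Your Steps 1 and 2 are sound and match the paper's machinery. Two of your caveats are unnecessary: the worry that $d_m$ must ``dominate'' $d_{m+1}$ is automatic, since the rule is lexicographic and $d_m$ stays nonzero at every block boundary, and there is no transient, since phase $\nu(s)$ begins right after $s$.

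The genuine gap is Step 3, which carries the entire content of the theorem, and the mechanism you propose there cannot work. Reduction modulo $2$ is blind to the signs. Every sign sequence of length $L$ has generating polynomial $\equiv 1+q+\cdots+q^{L-1}\pmod 2$, so $(1-q)^M\mid f$ read modulo $2$ says only that $2^{v_2(L)}\ge M+1$. This is exactly the parity fact the paper uses to show every sequence has a $2$-completion, and it carries no sign information. It therefore cannot force $g_s$ to agree with a Thue--Morse segment: $AA$ and $AB$ are identical mod $2$, yet one loops at phase $3$ and the other never stops.

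Your reduction to ``the prefix's own contribution'' also rests on a false premise. Only phase-$m$ blocks add multiples of $K_m$ to $d_m$; blocks $\pm T_j$ with $j<m$ add offset-dependent amounts. For $s=AA$ the prefix contributes $0$ to $d_3$, yet the third coefficient at the start of phase $3$ is $-735\not\equiv 0\pmod 8$. The obstruction lives entirely in the earlier blocks, whose counts depend on $s$ in an intricate way. Finally, the bound $m+1\le 2n+2$ is asserted, not derived.

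Two ideas are missing. The first is a mod-$4$ argument that exploits the structure after the $0$-completion, where shots come in antisymmetric pairs $\pm T_1$.
\begin{itemize}
\item Suppose $s_{2k}=s_{2k+1}\neq 0$, taking the last such $k$, and let $q=2^a-1$ with $2^a$ the least power of $2$ that is at least $n+2$.
\item If $s^{(q)}$ existed, suitable shifts $t$ and $t^{\xrightarrow{1}}$ of it would both have vanishing $q$-th coefficient. Their sum pairs consecutive terms with weights $\binom{i}{q}$.
\item By Kummer/Lucas these weights are $\equiv 0\pmod{2^a}$ for even $i$, and are odd only for $i\equiv q\pmod{2^a}$.
\item Pair sums are even and $\ell(s^{(0)})\le n+2k+3$. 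Hence modulo $4$ only the distinguished pair survives, giving $\pm 2\not\equiv 0$.
\end{itemize}
The second idea handles the case where no such pair exists. Then $s^{(0)}=r^\psi$ with $\ell(r)=\lceil n/2\rceil$. The Thue--Morse morphism commutes with completions up to an index shift and preserves block counts: $\psi$ sends $T_j$ to $T_{j+1}$ and scales the relevant coefficient by exactly the factor $2^j$ by which $K$ grows. Induction on length then shows that the $(2\ell(r)+2)$-completion, hence the $(2n+1)$-completion, of $s$ is undefined. That is where the bound $4^{n+1}$ comes from.
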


Here, the complement of a sequence of $A$'s and $B$'s is obtained by interchanging $A$'s and $B$'s, and the least period of an eventually periodic sequence is the smallest positive integer $m$ such that its $k^\text{th}$ and $(k+m)^\text{th}$ terms agree for all sufficiently large $k$.

By the result of Cooper and Dutle \cite{CD} (which we reprove near the end of Section~\ref{sec:ComputingAllTheShots}), a shooting sequence $s$ is a prefix of the Thue--Morse sequence precisely when there are no mistakes made when computing $s$ (all the shots in $s$ follow the fairness criterion), and $s$ is a prefix of the complement of the Thue--Morse sequence precisely when the first shot is the only mistake. This implies that the hypothesis of the theorem is equivalent to a mistake being made after the first shot. It also shows that the hypothesis cannot be weakened: in the excluded cases, the resulting sequence is Thue--Morse or its complement, and neither is eventually periodic by the overlap-freeness of the Thue--Morse sequence \cite{AS}.

To prove our main theorem, we begin in Section~\ref{sec:ComputingTheNextShot} by determining who shoots next.
In Section~\ref{sec:ComputingManyNextShots}, we use this rule to compute blocks of subsequent shots at once. In Section~\ref{sec:ComputingAllTheShots}, we iterate this block-by-block procedure to describe the resulting infinite sequence and explain how eventual periodicity arises. In Sections~\ref{sec:TheThueMorseMorphism} and~\ref{sec:WhenIsItPeriodic}, we develop the tools needed to determine exactly which initial prefixes lead to eventual periodicity, completing the proof of our main theorem. We conclude in Section~\ref{sec:FinalRemarks} with improved bounds, constructions, and a heuristic for the maximum possible least period.

For reference, we collect the notation used throughout the paper into Table~\ref{tab:notation}; each item is defined where it first appears.
\begin{table}[!ht]
\begin{center}
\renewcommand{\arraystretch}{1.25}
\small
\begin{tabular}{@{}l p{0.52\textwidth} l@{}}
\hline
Notation & Meaning & Introduced \\
\hline
$A$, $B$ & Alice's shot ($-1$) and Bob's shot ($+1$) & \S\ref{sec:ComputingTheNextShot} \\
$\ell(s)$ & Length of $s$: one more than the index of its last nonzero term & \S\ref{sec:ComputingTheNextShot} \\
$G_s(x)$ & $\sum_k s_k x^k$ & \S\ref{sec:ComputingTheNextShot} \\
$F_s(x)$ & Fairness polynomial $G_s(1-x)$ & \S\ref{sec:ComputingTheNextShot} \\
$c_i(s)$ & $i^\text{th}$ fairness coefficient: the coefficient of $x^i$ in $F_s(x)$ & \S\ref{sec:ComputingTheNextShot} \\
$\nu(s)$ & Fairness exponent: the least $i$ with $c_i(s) \neq 0$ & \S\ref{sec:ComputingTheNextShot} \\
$s^{\xrightarrow{k}}$ & $s$ shifted right by $k$ & \S\ref{sec:ComputingManyNextShots} \\
$T_j$, $T_\infty$ & $j^\text{th}$ Thue--Morse block, of length $2^j$; the signed Thue--Morse sequence & \S\ref{sec:ComputingManyNextShots} \\
$s \circ t$ & Concatenation: $s + t^{\xrightarrow{\ell(s)}}$ & \S\ref{sec:ComputingManyNextShots} \\
$s^n$ & $n$ concatenated copies of $s$; $s^{-n}$ means $(-s)^n$ & \S\ref{sec:ComputingManyNextShots} \\
$s^\infty$ & Infinitely many concatenated copies of $s$ & \S\ref{sec:ComputingManyNextShots} \\
$\tilde{s}$ & Infinite fair extension of $s$ & \S\ref{sec:ComputingManyNextShots} \\
$b_n^{\,s}$ & $n^\text{th}$ block count, equal to $c_n(s^{(n-1)}) / 2^{n(n-1)/2}$ & \S\ref{sec:ComputingAllTheShots} \\
$s^{(n)}$ & $n$-completion of $s$, equal to $s^{(n-1)} \circ (T_n)^{b_n^s}$ & \S\ref{sec:ComputingAllTheShots} \\
$s^\psi$ & Thue--Morse morphism: $(s^\psi)_{2n} = s_n$ and $(s^\psi)_{2n+1} = -s_n$ & \S\ref{sec:TheThueMorseMorphism} \\
\hline
\end{tabular}
\end{center}
\caption{Notation used throughout the paper}
\label{tab:notation}
\end{table}

\section{Computing the Next Shot}
\label{sec:ComputingTheNextShot}

We start by devising a way to predict the next term in a shooting sequence in the $p \rightarrow 0$ limit according to the fairness criterion. To do this, we first convert shooting sequences to integer sequences. Let a (finite or infinite) shooting sequence correspond to a unique infinite sequence of integers as follows: replace $A$ with $-1$, replace $B$ with $1$, and if the shooting sequence has ended, fill in the rest of the integer sequence with zeros.

For a nonzero integer sequence $s = (s_0,s_1, s_2, s_3, s_4, s_5, \dots)$ with finite support, let its \textit{length} be the quantity $\ell(s) = 1+\max\{k: s_k \neq 0\}$, and let $\ell(0) = 0$. For $s$ corresponding to a finite shooting sequence, let Bob's advantage after $s$ be the probability that Bob has won minus the probability that Alice has won after the shots encoded by $s$. The sign of this difference is precisely what determines who shoots next, so our first step is to describe how to compute it. This computation is Equation (1) of \cite{CD} and Lemma~1 of \cite{Shallit}; we include a proof for completeness. From here on, let a \textit{sign sequence} be an integer sequence that corresponds to a finite shooting sequence.

\begin{proposition}
\label{prop:advantage}
For a sign sequence $s$, Bob's advantage after $s$ equals $p\sum_{k=0}^{\infty}s_k(1-p)^{k}$.
\end{proposition}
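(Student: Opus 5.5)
The plan is to compute the probability that shot $k$ (indexed from $0$) is the one that ends the duel, and then add these contributions with the appropriate signs. Fix a sign sequence $s$ encoding a finite shooting sequence of length $\ell=\ell(s)$. The duel reaches shot $k$ exactly when none of shots $0,1,\dots,k-1$ hit. Since the shots are independent and each hits with probability $p$, this happens with probability $(1-p)^k$. Given that shot $k$ is reached, it hits with probability $p$. So the probability that the duel ends exactly at shot $k$, with the shooter of shot $k$ winning, is $p(1-p)^k$.

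Next we attach signs. If $s_k=1$, shot $k$ is Bob's, and this event adds $p(1-p)^k$ to Bob's probability of having won. If $s_k=-1$, the shot is Alice's, and the event adds the same amount to Alice's probability. The events ``the duel ends exactly at shot $k$'' for $0\le k<\ell$ are pairwise disjoint. Bob has won after $s$ exactly when one of them occurs with $s_k=1$, and similarly for Alice with $s_k=-1$. Therefore
\[\Pr(\text{Bob has won})-\Pr(\text{Alice has won})=\sum_{k<\ell,\ s_k=1}p(1-p)^k-\sum_{k<\ell,\ s_k=-1}p(1-p)^k=p\sum_{k=0}^{\ell-1}s_k(1-p)^k.\]

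Finally, the sign sequence satisfies $s_k=0$ for all $k\ge \ell$, so the finite sum equals the infinite series $p\sum_{k=0}^{\infty}s_k(1-p)^k$ in the statement; only finitely many of its terms are nonzero. No step here is a real obstacle. The one point needing care is that ``after $s$'' counts only the shots encoded by $s$, which is why the sum is truncated at $\ell$ and why the zero padding of the integer sequence is harmless.
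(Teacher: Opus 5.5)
Your proof is correct and matches the paper's argument: the shot indexed $k$ is the winning shot with probability $p(1-p)^k$, and summing these disjoint contributions with sign $s_k$ gives Bob's advantage. Your explicit remarks on disjointness and on the zero padding beyond $\ell(s)$ only spell out what the paper leaves implicit.
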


\begin{proof}
    The probability that the $(k+1)^\text{st}$ shot fired is the winning shot is $p(1-p)^k$. This implies that the probability that Alice wins on one of her shots is $p\sum_{k: s_k=-1}(1-p)^{k}$, and the probability that Bob wins on one of his shots is $p\sum_{k: s_k=1}(1-p)^{k}$. This means that Bob's advantage after $s$ is the difference $p\sum_{k: s_k=1}(1-p)^{k} - p\sum_{k: s_k=-1}(1-p)^{k} = p\sum_{k=0}^{\infty}s_k(1-p)^{k}$.
\end{proof}

The fairness criterion states that Alice should shoot next if and only if Bob's advantage is nonnegative. From Proposition~\ref{prop:advantage}, we have that $\sum_{k=0}^{\infty}s_k(1-p)^{k}$ is Bob's advantage divided by $p$. If this sum is nonnegative, then Alice shoots next, and if it is negative, Bob shoots next. For small positive $p$, the sign of this sum is the same as the sign of its first nonzero coefficient when expanded as a polynomial in $p$. To make this concrete, we provide the following definition.

\begin{definition} [Fairness polynomial]
For an infinite integer sequence $s$ with finite support, the \textit{generating function} of $s$ is $G_s(x) = \sum_{n=0}^\infty s_nx^n$. The \textit{fairness polynomial} of $s$ is $F_s(x) = G_s(1-x) = \sum_{i=0}^\infty c_i(s)x^i$, where the $c_i(s)$ are the \textit{fairness coefficients} of $s$. For $s \neq 0$, the \textit{fairness exponent} of $s$ is $\nu(s) = \min\{i: c_i(s) \neq 0\}$. Let $\nu(0) = \infty$.
\end{definition}

For nonzero $s$, the generating function and fairness polynomial both have degree $\ell(s)-1$; when $s=0$, both are the zero polynomial. Thus the fairness exponent of a nonzero $s$ is at most $\ell(s)-1$.

By the observations above, we have the following corollary.

\begin{corollary}
\label{cor:nextshot}
Let $s$ be a nonzero sign sequence and let $i=\nu(s)$. For small positive $p$, Alice shoots next if $c_i(s)>0$ and Bob shoots next if $c_i(s)<0$.
\end{corollary}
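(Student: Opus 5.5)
The plan is to read Corollary~\ref{cor:nextshot} straight off Proposition~\ref{prop:advantage}. The only real work is turning ``sign for small $p$'' into ``sign of the lowest-order coefficient.''

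By Proposition~\ref{prop:advantage}, Bob's advantage after $s$ is $p\,G_s(1-p)$. Since $p>0$, its sign is the sign of $G_s(1-p)=F_s(p)$. The fairness criterion says Alice shoots next exactly when this advantage is nonnegative and Bob shoots next exactly when it is negative. So it suffices to show that, for all sufficiently small $p>0$, the value $F_s(p)$ is nonzero and has the sign of $c_i(s)$, where $i=\nu(s)$.

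Since $s\neq 0$, the polynomial $F_s$ is nonzero: it has degree $\ell(s)-1$, as noted before the corollary, because $x\mapsto 1-x$ is an invertible substitution. Hence $i=\nu(s)$ is finite. Write
\[
F_s(p)=p^i\Bigl(c_i(s)+\sum_{j>i}c_j(s)p^{j-i}\Bigr).
\]
The sum on the right is a finite polynomial in $p$ that vanishes at $p=0$. Let $M=\sum_{j>i}|c_j(s)|$. For $0<p<\min\{1,|c_i(s)|/(M+1)\}$, the sum has absolute value at most $Mp<|c_i(s)|$. So the bracket is nonzero and has the sign of $c_i(s)$. Because $p^i>0$, the same holds for $F_s(p)$.

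Therefore, for all sufficiently small $p$, Bob's advantage is positive when $c_i(s)>0$, so Alice shoots next. It is negative when $c_i(s)<0$, so Bob shoots next.

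There is no serious obstacle here. The one point that needs care is that the tie case, where the advantage equals zero, never arises for small $p$. This is guaranteed because $F_s\not\equiv 0$, which follows from $s\neq 0$.
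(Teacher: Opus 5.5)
Your proof is correct and is essentially the paper's argument: you use Proposition~\ref{prop:advantage} to reduce to the sign of $F_s(p)=G_s(1-p)$, then note that for small $p>0$ this sign is that of the lowest-order nonzero coefficient $c_{\nu(s)}(s)$. The only difference is that you make the paper's informal ``for small positive $p$'' step explicit with the threshold $p<\min\{1,|c_i(s)|/(M+1)\}$, where $M=\sum_{j>i}|c_j(s)|$, and this threshold also rules out the tie case.
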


To compute the fairness coefficients, we use the binomial expansion:
\[(1-x)^k = 1 - kx + \binom{k}{2}x^2 - \binom{k}{3}x^3 + \cdots.\]
This gives us
\begin{equation}
\label{eq:coefficients}
c_i(s) = (-1)^i\sum_{k=i}^{\ell(s)-1}\binom{k}{i}s_k = (-1)^i\sum_{k=i}^\infty\binom{k}{i}s_k.
\end{equation}

Here is an intuitive way to understand the first few fairness coefficients. If we let $\mathcal{A}$ be the set of turns on which Alice shoots, and $\mathcal{B}$ be the set of turns on which Bob shoots, then 
\[c_0 = \sum_{k \in \mathcal{B}} 1 - \sum_{k \in \mathcal{A}} 1, \quad c_1 = \sum_{k \in \mathcal{B}} (-k) - \sum_{k \in \mathcal{A}} (-k), \quad c_2 = \sum_{k \in \mathcal{B}} \binom{k}{2} - \sum_{k \in \mathcal{A}} \binom{k}{2}.\]
This shows how the $0^\text{th}$ fairness coefficient encapsulates the idea that it is advantageous to take more shots: for example, if Bob has taken more shots, then the $0^\text{th}$ fairness coefficient is positive, meaning that Bob is at an advantage and therefore Alice must shoot next. When the $0^\text{th}$ fairness coefficient is zero, the sum corresponding to the $1^\text{st}$ fairness coefficient encapsulates the idea that taking shots later is disadvantageous. The penalty given by this $1^\text{st}$ fairness coefficient is linear in the time of the shot, so when the players take an equal number of shots but their average shot times differ, the player with an advantage is precisely the player whose shots are earlier on average. 

Now we assume that the $0^\text{th}$ and $1^\text{st}$ fairness coefficients are both zero, corresponding to both players taking an equal number of shots and the average time of those shots being the same for both players. Since $\binom{k}{2}$ is a convex function in $k$, the $2^\text{nd}$ fairness coefficient dictates that having shot times that are spread out around the common center is more advantageous than having shots clustered near it. In fact, since $\binom{k}{2}$ is quadratic in $k$, the $2^\text{nd}$ fairness coefficient compares the variances of Bob's and Alice's shot times. Higher-order fairness coefficients capture finer distinctions between the distribution of Alice's shot times and the distribution of Bob's shot times.

We can derive the parity of the fairness coefficients easily from Equation~\eqref{eq:coefficients}. For $s$ corresponding to a shooting sequence, using the fact that $s_k$ is odd (either $1$ or $-1$) precisely when $k < \ell(s)$, we can reduce the equation modulo 2 and use the hockey stick identity to get the following: 

\begin{equation}
\label{eq:coeffmod2}
c_i(s) \equiv \sum_{k=i}^{\ell(s)-1}\binom{k}{i} \equiv \binom{\ell(s)}{i+1} \pmod{2}.
\end{equation}

Surprisingly, this parity only depends on the length of $s$, not on its specific terms.

\section{Computing Many Next Shots}
\label{sec:ComputingManyNextShots}

Now that we know how to determine the next person to shoot according to the fairness criterion, we develop an approach that allows us to calculate many of the next shots at once. Our approach hinges on the concept of \textit{Thue--Morse blocks}, which we will define formally shortly. Starting with the single term $A$, we iteratively append the complement of the current block, obtaining $AB,\, ABBA,\, ABBABAAB,\, \ldots$. As we will show in this section, these blocks and their complements describe the chunks of shots that arise when fairly extending a shooting sequence.

For integer sequences $s$ and $t$, let $s+t$ be the termwise sum of $s$ and $t$ and let $-s$ be the termwise negation of $s$. For any nonnegative integer $k$, let $s^{\xrightarrow{k}}$ be the sequence such that $(s^{\xrightarrow{k}})_n = 0$ for $0 \leq n < k$ and $(s^{\xrightarrow{k}})_n = s_{n-k}$ for $n \geq k$. This operation takes the elements of $s$ and shifts them $k$ positions to the right, so we call this sequence the \textit{$k$-shift} of $s$. One can see that
\[(-s)^{\xrightarrow{k}} = -(s^{\xrightarrow{k}}),\ \quad (s+t)^{\xrightarrow{k}} = s^{\xrightarrow{k}} + t^{\xrightarrow{k}},\ \quad \textrm{ and } \ \quad (s^{\xrightarrow{k}})^{\xrightarrow{m}} = s^{\xrightarrow{k+m}}.\]
Sums, negations, and shifts interact nicely with the generating functions we defined above, giving us the following properties:
\begin{align}
    G_{s+t}(x) = G_s(x) + G_t(x) \quad & \text{ and } \quad F_{s+t}(x) = F_s(x) + F_t(x), \nonumber \\
    G_{-s}(x) = -G_s(x) \quad & \text{ and } \quad F_{-s}(x) = -F_s(x), \nonumber \\
    G_{s^{\xrightarrow{k}}}(x) = x^kG_s(x) \quad & \text{ and } \quad F_{s^{\xrightarrow{k}}}(x) = (1-x)^kF_s(x). \label{eq:shiftgf}
\end{align}

Now we can define Thue--Morse blocks.
\begin{definition} [Thue--Morse blocks]
    Let $T_0 = (-1, 0, 0, 0, 0, \dots)$ and recursively let $T_{j+1} = T_j-(T_j)^{\xrightarrow{2^j}}$, using subtraction to mean adding the first sequence to the negation of the second. We call the sequence $T_j$ the \textit{$j^\text{th}$ Thue--Morse block}.
\end{definition}
Suppressing trailing zeros, the first few blocks and their corresponding shooting sequences are:
\begin{align*}
    T_0 &= (-1) & \longleftrightarrow \quad& A, \\
    T_1 &= (-1, 1) & \longleftrightarrow \quad& AB, \\
    T_2 &= (-1, 1, 1, -1) & \longleftrightarrow \quad& ABBA,\\
    T_3 &= (-1, 1, 1, -1, 1, -1, -1, 1) & \longleftrightarrow \quad& ABBABAAB.
\end{align*}
It can be shown that $\ell(T_j) = 2^j$ by induction.

The signed Thue--Morse sequence $T_\infty$ is defined as $\lim_{n \rightarrow \infty}T_n$. (Like the $p \rightarrow 0$ limit defined in the introduction, the limit here is taken termwise.) As we show in Section~\ref{sec:TheThueMorseMorphism}, this is equivalent to the Thue--Morse sequence as defined in \cite{AS}, with all the zeros replaced by $-1$s.

From our fairness coefficient intuition earlier, we see that $T_1$ balances the number of shots taken by Alice and Bob, so $c_0(T_1) = 0$, but the mean time of the shots is not equal, so $c_1(T_1) \neq 0$. Likewise, $T_2$ balances the number and mean time of the shots but not the variance of the times, so $c_0(T_2) = c_1(T_2) = 0$ but $c_2(T_2) \neq 0$. In a similar way, we see that $T_3$ balances the number of shots, the mean time of the shots, and the variance of the times of the shots, so $c_0(T_3) = c_1(T_3) = c_2(T_3) = 0$, while $c_3(T_3) \neq 0$ because the players' shot times differ in their asymmetry about the common mean. This pattern continues, and we will prove this below.

Consider the function $G_{T_{j+1}}(x)$. By the recursive definition of $T_j$ and the properties listed above, this function equals $(1-x^{2^j})G_{T_j}(x)$. Since $G_{T_0}(x) = -1$, we can show by induction on $j$ that \[G_{T_j}(x) = -\prod_{i=0}^{j-1}(1-x^{2^i}).\] This means that 
\begin{equation} \label{eq:fairpoly}
    F_{T_j}(x) = -\prod_{i=0}^{j-1}(1-(1-x)^{2^i}).
\end{equation}

Now, we can prove a claim about the fairness coefficients of the shifts of these Thue--Morse blocks.

\begin{lemma} 
\label{lem:coefs}
    For integers $j, k \geq 0$, the fairness exponent of $(T_j)^{\xrightarrow{k}}$ is $j$ and $c_j((T_j)^{\xrightarrow{k}}) = -2^{j(j-1)/2}$.
\end{lemma}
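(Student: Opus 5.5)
We will compute the fairness polynomial of the shifted block explicitly and read off its lowest-order term. By the shift rule in Equation~\eqref{eq:shiftgf} together with Equation~\eqref{eq:fairpoly}, we have
\[
F_{(T_j)^{\xrightarrow{k}}}(x) = (1-x)^k F_{T_j}(x) = -(1-x)^k \prod_{i=0}^{j-1}\bigl(1-(1-x)^{2^i}\bigr).
\]
Since $(1-x)^k = 1 - kx + \cdots$ has constant term $1$, multiplying by it does not change the lowest-degree term of a power series. So the fairness exponent and the lowest coefficient of $(T_j)^{\xrightarrow{k}}$ coincide with those of $T_j$ itself, and it suffices to treat $k=0$.

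For the unshifted block, we expand each factor separately. By the binomial theorem, $1-(1-x)^{2^i} = 2^i x - \binom{2^i}{2}x^2 + \cdots$, so each factor is $x$ times a power series with constant term $2^i$, which is nonzero. A product of $j$ such factors is therefore $x^j$ times a power series with constant term $\prod_{i=0}^{j-1}2^i = 2^{0+1+\cdots+(j-1)} = 2^{j(j-1)/2}$.

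Including the leading minus sign, the lowest nonzero term of $F_{(T_j)^{\xrightarrow{k}}}(x)$ is $-2^{j(j-1)/2}x^j$. This gives $\nu\bigl((T_j)^{\xrightarrow{k}}\bigr) = j$ and $c_j\bigl((T_j)^{\xrightarrow{k}}\bigr) = -2^{j(j-1)/2}$. The case $j=0$ is consistent: the product is empty, so $F = -(1-x)^k$, with exponent $0$ and coefficient $-1 = -2^0$.

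There is no real obstacle here. The only point needing care is to justify that multiplying by a series with nonzero constant term, here $(1-x)^k$, preserves both the order of vanishing and the lowest coefficient. This holds because the lowest term of a product of power series is the product of the lowest terms.
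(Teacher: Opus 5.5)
Your proof is correct and follows essentially the same route as the paper: write $F_{(T_j)^{\xrightarrow{k}}}(x) = -(1-x)^k\prod_{i=0}^{j-1}\bigl(1-(1-x)^{2^i}\bigr)$ and multiply the lowest-degree terms of the factors ($-1$ and $2^i x$) to get $-2^{j(j-1)/2}x^j$. The only difference is that you first reduce to $k=0$ before expanding, whereas the paper handles the factor $(1-x)^k$ in the same step.
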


\begin{proof}
    By Equations \eqref{eq:shiftgf} and \eqref{eq:fairpoly}, we have that $F_{(T_j)^{\xrightarrow{k}}}(x) = -(1-x)^k\prod_{i=0}^{j-1}(1-(1-x)^{2^i})$. To find the lowest-degree term of $F_{(T_j)^{\xrightarrow{k}}}(x)$, we consider the lowest-degree term of each of its factors. The lowest-degree term of $-(1-x)^k$ is the constant term, $-1$. The constant term of the factor $1-(1-x)^{2^i}$ is 0 and its linear term is $2^ix$. Multiplying all the lowest-degree terms of the factors of $F_{(T_j)^{\xrightarrow{k}}}(x)$, we get $-1 \cdot \prod_{i=0}^{j-1}2^ix = -2^{j(j-1)/2}x^j$, so the lowest-degree term is $-2^{j(j-1)/2}x^j$, proving both claims.
\end{proof}

The vanishing of these fairness coefficients is the source of the classical connection between the Thue--Morse sequence and the Prouhet--Tarry--Escott problem mentioned in the introduction. In particular, the positions of Alice's and Bob's shots in $T_j$ have equal sums of like powers through degree $j-1$, since these powers can be expressed as a linear combination of binomial coefficients with lower index at most $j-1$.

Is $T_j$ the shortest nonzero sign sequence that has fairness exponent at least $j$? Equivalently, does $G_{T_j}$ have minimum degree among the Littlewood polynomials (polynomials with coefficients in $\{-1,1\}$) having a root of multiplicity at least $j$ at $1$? Boyd found a sequence of length $48$ with a generating function that has a multiplicity-6 root at 1~\cite{Boyd}, shorter than $T_6$, which has length $64$. We display its corresponding shooting sequence below:
\[AABBBBAB\;BAAAAABA\;ABBBAABB\;BBAABBBA\;ABAAAAAB\;BABBBBAA.\]
Thus, although Cooper and Dutle's greedy fairness rule produces the Thue--Morse blocks~\cite{CD}, those blocks need not be shortest among sign sequences with fairness exponent at least $j$. The minimum lengths for $j=1,\ldots,10$ are $2, 4, 8, 16, 32, 48, 96, 144, 192, 240$; for larger $j$, they remain unknown (see~\cite{BGPW} and the references therein). Boyd proved a lower bound of $e^{c\sqrt{j \log j}}$ for an absolute constant $c > 0$~\cite[Remark~1]{Boyd}, while known constructions give an exponential upper bound~\cite{BGPW}.

For two sequences $s$ and $t$ where $s$ has finite support, let the \emph{concatenation} of $s$ and $t$, denoted by $s \circ t$, equal $s + t^{\xrightarrow{\ell(s)}}$. This takes $t$ and places it after the last nonzero term of $s$. One basic property of concatenation is that if $t$ also has finite support, $\ell(s \circ t) = \ell(s) + \ell(t)$. Using this, one can show that concatenation is associative. Concatenation also plays nicely with negation and shifts: $-(s \circ t) = (-s) \circ (-t)$ and $(s \circ t)^{\xrightarrow{k}} = s^{\xrightarrow{k}} \circ t$ unless $s = 0$. The Thue--Morse blocks satisfy the property that $T_j \circ (-T_j) = T_{j+1}$.

Letting $s$ be an integer sequence with finite support, for a positive integer $n$, let $s^n$ be $s \circ s \circ \dots \circ s$ where the number of occurrences of $s$ is $n$. Let $s^\infty = \lim_{n \rightarrow \infty}(s^n)$; this creates a periodic sequence by concatenating $s$ to itself repeatedly. Let $s^0$ be the zero sequence, let $s^{-n} = (-s)^n$, and let $s^{-\infty} = (-s)^\infty$. The length of the sequence $s^n$ for an integer $n$ is $|n|\ell(s)$. Note that standard exponent laws fail: $s^1 \circ s^{-1} \neq s^0$ unless $s = 0$.

Let a sequence $s$ with finite support be a \emph{prefix} of another sequence $t$ if for all nonnegative $i < \ell(s)$, we have $s_i = t_i$. For any sequence $t$, we have that $s$ is a prefix of $s \circ t$. 

In preparation for the main claim of this section, we define what it means to fairly extend a shooting sequence.

\begin{definition} [Fair extensions]
    Let $s$ be an integer sequence corresponding to a finite shooting sequence $s'$. Consider the infinite continuation of $s'$ such that for any $n \geq \ell(s)$, the $(n+1)^\text{st}$ shot in the continuation follows the fairness criterion if $p$ is sufficiently small (the threshold depends on $n$). The \emph{infinite fair extension} $\tilde{s}$ of $s$ is the integer sequence corresponding to this infinite shooting sequence. A prefix of $\tilde{s}$ that has $s$ as a prefix is called a \emph{finite fair extension} of $s$.
\end{definition}

Note that if $t$ is a finite fair extension of $s$, then $\tilde{s} = \tilde{t}$ and any fair extension of $t$ is a fair extension of $s$. Corollary~\ref{cor:nextshot} provides a way to compute this sequence term-by-term, using the fairness criterion for small $p$ to identify who takes the next shot (unless $s$ is zero, in which case the first shot is taken by Alice). This computation procedure proves that $\tilde{s}$ exists and is unique.

The main claim of this section is a proposition that illustrates how Thue--Morse blocks can be used to predict several of the next shots at once. We use a lemma and its corollaries to build up to this proposition.

\begin{lemma} 
\label{lem:concatthue}
    Let $s$ be an integer sequence with finite support, and let $i$ and $j$ be nonnegative integers. If $i < j$, then $c_i(s \circ T_j) = c_i(s \circ (-T_j)) = c_i(s)$. If $i = j$, we have that $c_i(s \circ T_j) = c_i(s) - 2^{j(j-1)/2}$ and $c_i(s \circ (-T_j)) = c_i(s) + 2^{j(j-1)/2}$.
\end{lemma}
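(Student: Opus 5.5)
The plan is to express everything in terms of fairness polynomials and use linearity. By definition, $s \circ T_j = s + (T_j)^{\xrightarrow{\ell(s)}}$. Likewise, since shifting commutes with negation, $s \circ (-T_j) = s - (T_j)^{\xrightarrow{\ell(s)}}$. The relations in Equation~\eqref{eq:shiftgf} say that $F$ is additive and respects negation, so every fairness coefficient is linear in the sequence. Hence for every $i$,
\[
c_i(s \circ T_j) = c_i(s) + c_i\bigl((T_j)^{\xrightarrow{\ell(s)}}\bigr), \qquad c_i(s \circ (-T_j)) = c_i(s) - c_i\bigl((T_j)^{\xrightarrow{\ell(s)}}\bigr).
\]

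It then remains to compute the coefficients $c_i$ of the shifted block for $i \le j$, and this is exactly Lemma~\ref{lem:coefs} with $k = \ell(s)$. That lemma says the fairness exponent of $(T_j)^{\xrightarrow{k}}$ is $j$. So for $i < j$ the coefficient $c_i\bigl((T_j)^{\xrightarrow{k}}\bigr)$ is $0$, and both concatenations leave $c_i$ unchanged. For $i = j$ the lemma gives $c_j\bigl((T_j)^{\xrightarrow{k}}\bigr) = -2^{j(j-1)/2}$. Substituting this into the two displayed identities gives $c_j(s)-2^{j(j-1)/2}$ and $c_j(s)+2^{j(j-1)/2}$, as claimed.

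The only point needing care is the case $s = 0$. There $\ell(s)=0$ and the concatenation is just $\pm T_j$, which is the shift by $k=0$ and is still covered by Lemma~\ref{lem:coefs}. I expect no real obstacle: the argument is linearity of $F$ combined with Lemma~\ref{lem:coefs}.
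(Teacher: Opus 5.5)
Your proposal is correct and follows essentially the same route as the paper: write $s \circ (\pm T_j) = s \pm (T_j)^{\xrightarrow{\ell(s)}}$, use additivity of the fairness coefficients, and read off $c_i$ of the shifted block from Lemma~\ref{lem:coefs} with $k=\ell(s)$. Your remark on the case $s=0$ (shift by $k=0$) is a harmless extra check that the paper leaves implicit.
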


\begin{proof}
    By Lemma~\ref{lem:coefs}, we have that $\nu\big((T_j)^{\xrightarrow{\ell(s)}}\big) = j$, so the fairness coefficient $c_i\big(( T_j)^{\xrightarrow{\ell(s)}}\big)$ equals $0$ for $i < j$ and equals $-2^{j(j-1)/2}$ for $i = j$. Since $s \circ (\pm T_j) = s\pm( T_j)^{\xrightarrow{\ell(s)}}$, by the additivity of fairness coefficients, the lemma follows.
\end{proof}

Thus, $2^{j(j-1)/2}$ is the amount by which concatenating a copy of $T_j$ or $-T_j$ changes the $j^\text{th}$ fairness coefficient, while leaving all lower coefficients unchanged. From this, we derive two corollaries (using induction on $|n|$ for the second one).

\begin{corollary} 
\label{cor:fairexp}
    If the fairness exponent of a sequence $s$ is at least $j$, then $\nu(s \circ T_j) \geq j$ and $\nu(s \circ (-T_j)) \geq j$.
\end{corollary}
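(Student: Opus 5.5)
This follows directly from Lemma~\ref{lem:concatthue}, with no induction needed. The idea is that concatenating $\pm T_j$ adds a sequence whose fairness coefficients vanish below index $j$. So every coefficient of index below $j$ stays at zero.

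The assumption $\nu(s) \geq j$ means $c_i(s) = 0$ for all $i < j$. This includes the degenerate case $s = 0$, where $\nu(0) = \infty$ and every coefficient vanishes. Now fix any $i < j$. The first part of Lemma~\ref{lem:concatthue} gives
\[
c_i(s \circ T_j) = c_i(s) = 0 \quad\text{and}\quad c_i(s \circ (-T_j)) = c_i(s) = 0 .
\]
Hence all fairness coefficients of $s \circ T_j$ and of $s \circ (-T_j)$ with index below $j$ vanish.

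Two cases remain. If the sequence $s \circ (\pm T_j)$ is nonzero, its least nonzero coefficient therefore has index at least $j$, so $\nu(s \circ (\pm T_j)) \geq j$. If the sequence is zero, then its fairness exponent is $\infty \geq j$ by convention. This second case cannot in fact occur, because $T_j$ has length $2^j > 0$, but the convention covers it regardless.

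The only care needed is this bookkeeping: the definition of $\nu$ via a minimum, and the convention for the zero sequence. Lemma~\ref{lem:concatthue} does all the substantive work.
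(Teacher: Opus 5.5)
Your proposal is correct and matches the paper's approach: the paper derives this corollary directly from Lemma~\ref{lem:concatthue}, since the coefficients $c_i$ with $i<j$ are unchanged by appending $\pm T_j$. Your extra bookkeeping about the zero-sequence convention is harmless, and you correctly note that the case cannot occur.
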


\begin{corollary}
\label{cor:concatmanythue}
    Let $s$ be a finitely supported integer sequence with $\nu(s) \geq j$, and let $n$ be an arbitrary integer. Then $\nu(s \circ (T_j)^n) \geq j$ and $c_j(s \circ (T_j)^n) = c_j(s) - n \cdot 2^{j(j-1)/2}$.
\end{corollary}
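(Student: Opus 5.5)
The plan is induction on $|n|$, using Corollary~\ref{cor:fairexp} for the fairness-exponent claim and Lemma~\ref{lem:concatthue} for the coefficient identity. I first fix conventions. For $n=0$, $(T_j)^0$ is the zero sequence, and $s\circ 0 = s + 0 = s$. So both claims are immediate: $\nu(s)\ge j$ and $c_j(s) = c_j(s) - 0$.

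For positive $n$, I use associativity of concatenation to write $s\circ (T_j)^{n} = \bigl(s\circ (T_j)^{n-1}\bigr)\circ T_j$. This rewriting is only valid when $n-1\ge 1$, or trivially when $n=1$, since then $s\circ T_j$ is literally the case. Set $s' = s\circ (T_j)^{n-1}$. By the induction hypothesis, $\nu(s')\ge j$ and $c_j(s') = c_j(s) - (n-1)2^{j(j-1)/2}$. Corollary~\ref{cor:fairexp} applied to $s'$ gives $\nu(s'\circ T_j)\ge j$. The $i=j$ case of Lemma~\ref{lem:concatthue} gives $c_j(s'\circ T_j) = c_j(s') - 2^{j(j-1)/2} = c_j(s) - n\,2^{j(j-1)/2}$, which completes the step.

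For negative $n = -m$, we have $(T_j)^{-m} = (-T_j)^m$, so $s\circ (T_j)^{-m} = \bigl(s\circ (-T_j)^{m-1}\bigr)\circ(-T_j)$. The same induction on $m$ goes through with the $-T_j$ halves of Corollary~\ref{cor:fairexp} and Lemma~\ref{lem:concatthue}. Each step now adds $2^{j(j-1)/2}$, giving $c_j = c_j(s) + m\,2^{j(j-1)/2} = c_j(s) - n\,2^{j(j-1)/2}$.

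The only point needing care is the bookkeeping of the definitions of $s^n$, which are $n$-fold concatenation for $n\ge1$, the zero sequence for $n=0$, and $(-s)^{|n|}$ for $n<0$. Together with associativity of $\circ$, these justify splitting off one final copy of $\pm T_j$. I expect no real obstacle beyond this, because the coefficient change is additive and independent of where the block is placed.
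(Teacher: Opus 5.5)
Your proposal is correct and follows the paper's route: induction on $|n|$, splitting off one final copy of $T_j$ or $-T_j$ and applying Corollary~\ref{cor:fairexp} for the exponent bound and Lemma~\ref{lem:concatthue} for the coefficient change. Your handling of the $n=0$ and negative-$n$ conventions and of associativity is sound.
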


Now, we prove the main claim of the section, determining how these Thue--Morse blocks can be used to fairly extend a sequence.

\begin{proposition}
\label{prop:manyshots}
    For a sign sequence $s$ such that $\nu(s) \geq j$, either $s \circ T_j$ or $s \circ (-T_j)$ is a finite fair extension of $s$. If $\nu(s)=j$, then the choice is determined by the sign of $c_j(s)$:
    \[\begin{cases}
    T_j, & c_j(s)>0,\\
    -T_j, & c_j(s)<0.
    \end{cases}\]
\end{proposition}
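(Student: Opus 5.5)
We argue by induction on $j$, proving the statement for all sign sequences $s$ with $\nu(s)\ge j$ simultaneously. Write $\varepsilon=+1$ if $c_j(s)\ge 0$ and $\varepsilon=-1$ if $c_j(s)<0$. We aim to show that $s\circ(\varepsilon T_j)$ is a finite fair extension of $s$. (When $\nu(s)>j$ we have $c_j(s)=0$, and the first fair shot will decide which sign occurs.)

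For the base case $j=0$, $T_0=(-1)$, so the claim says the next fair shot is $A$ when $c_0(s)\ge 0$ and $B$ otherwise. This is Corollary~\ref{cor:nextshot} when $\nu(s)=0$. If $\nu(s)>0$, the sign of $c_{\nu(s)}$ decides the shot, and some sign is always produced, so one of $s\circ(\pm T_0)$ is a fair extension. If $s=0$, Alice shoots first, giving $s\circ T_0$.

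For the inductive step, suppose $\nu(s)\ge j+1$. Since $\nu(s)\ge j$, the inductive hypothesis shows that $s\circ(\varepsilon T_j)$ is a fair extension for some sign $\varepsilon$. Set $t=s\circ(\varepsilon T_j)$. By Lemma~\ref{lem:concatthue}, $\nu(t)\ge j$ and
\[c_j(t)=c_j(s)-\varepsilon 2^{j(j-1)/2}=-\varepsilon 2^{j(j-1)/2}.\]
Hence $\nu(t)=j$ exactly, and the sign of $c_j(t)$ is $-\varepsilon$. Applying the inductive hypothesis to $t$ in the case $\nu(t)=j$, the next block is $-\varepsilon T_j$. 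So $s\circ(\varepsilon T_j)\circ(-\varepsilon T_j)=s\circ(\varepsilon T_{j+1})$ is a fair extension of $t$, and therefore of $s$. This settles the first claim at level $j+1$.

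For the sign determination at level $j+1$, assume $\nu(s)=j+1$, so $c_j(s)=0$ and $c_{j+1}(s)\ne0$. We must show that $\varepsilon=\operatorname{sgn}c_{j+1}(s)$. The sign $\varepsilon$ is fixed by the very first fair shot after $s$, because $T_j$ begins with $A$, that is, with $-1$. By Corollary~\ref{cor:nextshot}, the first shot is $A$ exactly when $c_{\nu(s)}(s)=c_{j+1}(s)>0$, which gives $\varepsilon=+1$, and it is $B$ when $c_{j+1}(s)<0$. Therefore the whole block $\varepsilon T_{j+1}$ carries the sign of $c_{j+1}(s)$, as the statement requires.

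The main obstacle is stating the induction hypothesis correctly. It has to cover every sequence with $\nu(s)\ge j$, including $s=0$, and it has to record that the sign of the block equals the sign of its first shot. Only then does the level-$j$ result apply to the intermediate sequence $t$, where $\nu(t)=j$ holds exactly. Once the hypothesis is set up this way, the computation $c_j(t)=-\varepsilon 2^{j(j-1)/2}$ from Lemma~\ref{lem:concatthue} is the whole mechanism: it forces the second half of $T_{j+1}$ to be the negation of the first half.
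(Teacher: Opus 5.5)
Your proof is correct and follows essentially the same route as the paper: you induct on $j$, use Lemma~\ref{lem:concatthue} to see that after the first $\pm T_j$ the $j^\text{th}$ coefficient becomes $\mp 2^{j(j-1)/2}$, so the next block is the negated one and together they give $\pm T_{j+1}$, and you fix the sign through the first shot and Corollary~\ref{cor:nextshot}; the only difference is that the paper treats both candidate signs at once instead of first selecting the actual one. One small slip to fix is that your opening definition $\varepsilon=+1$ when $c_j(s)\ge 0$, and the base-case paraphrase ``$A$ when $c_0(s)\ge 0$,'' are false when $c_j(s)=0$ (e.g.\ $s=T_1$ at $j=0$ has $c_0=0$ but Bob shoots next), although your argument never uses them, since your inductive step correctly takes $\varepsilon$ to be whatever sign the level-$j$ fair extension actually has.
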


\begin{proof}
    We prove this by induction on $j$. For the case of $j = 0$, the next shot needs to be fired by either Alice or Bob. By Corollary~\ref{cor:nextshot}, if $\nu(s) = 0$, then Alice shoots next if $c_0(s) > 0$ and Bob shoots next if $c_0(s) < 0$. This proves the base case.

    For the inductive step, assume that this proposition holds for any sign sequence $t$ with $\nu(t) \geq j$. Let $s$ be a sign sequence with $\nu(s) \geq j+1$. We have that $c_j(s) = 0$, so by Lemma~\ref{lem:concatthue}, we have $c_j(s \circ T_j) = -2^{j(j-1)/2}$ and $c_j(s \circ (-T_j)) = 2^{j(j-1)/2}$. Since Corollary~\ref{cor:fairexp} implies that $\nu(s \circ T_j) \geq j$ and $\nu(s \circ (-T_j)) \geq j$, we have $\nu(s \circ T_j) = \nu(s \circ (-T_j)) = j$. Now, by the inductive hypothesis, $s \circ T_j \circ (-T_j) = s \circ T_{j+1}$ fairly extends $s \circ T_j$ and $s \circ (-T_j) \circ T_j = s \circ (-T_{j+1})$ fairly extends $s \circ (-T_j)$. Since either $s \circ T_j$ or $s \circ (-T_j)$ is a fair extension of $s$ (by the inductive hypothesis), either $s \circ T_{j+1}$ or $s \circ (-T_{j+1})$ fairly extends $s$. 
    
    If $\nu(s) = j+1$, then $c_{j+1}(s) \neq 0$. If $c_{j+1}(s) > 0$, then Alice must take the next shot after $s$, so $s \circ (-T_{j+1})$ cannot be a fair extension of $s$. This implies that in this case, $s \circ T_{j+1}$ fairly extends $s$. Similarly, if $c_{j+1}(s) < 0$, then $s \circ (-T_{j+1})$ fairly extends $s$, proving the claim.
\end{proof}

\section{Computing All the Shots}
\label{sec:ComputingAllTheShots}

Now, we are in a position to explain and explore the phenomenon of eventual periodicity discussed in the introduction. We begin by introducing two intuitive consequences of Proposition~\ref{prop:manyshots}.

Consider a sign sequence $s$ such that $\nu(s) \geq j$. Intuitively, $s$ can be fairly extended by iteratively concatenating identical copies of $T_j$ or $-T_j$ to it (depending on the sign of $c_j(s)$), stopping before or when the $j^\text{th}$ fairness coefficient of the extended sequence reaches or crosses zero. In particular, concatenating these copies until the coefficient falls within the interval $[0, 2^{j(j-1)/2})$ yields a finite fair extension of $s$. Formalizing this stopping condition yields the following result.

\begin{corollary}
\label{cor:repeatblock}
    Let $s$ be a sign sequence and $j$ be a nonnegative integer such that $s$ has fairness exponent at least $j$. Let $q = \left\lfloor \frac{c_j(s)}{2^{j(j-1)/2}}\right\rfloor$ be the quotient and $r = c_j(s) - q \cdot 2^{j(j-1)/2}$ be the remainder of dividing $c_j(s)$ by $2^{j(j-1)/2}$, where $0 \leq r < 2^{j(j-1)/2}$. Then $s \circ (T_j)^q$ is a fair extension of $s$ whose fairness exponent is at least $j$ and whose $j^\text{th}$ fairness coefficient equals $r$.
\end{corollary}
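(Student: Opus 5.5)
The plan is to induct on $|q|$, peeling off one copy of $T_j$ or $-T_j$ at a time with Proposition~\ref{prop:manyshots}. The two algebraic claims come directly from Corollary~\ref{cor:concatmanythue}, so the only real content is the fair-extension claim.

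First, the algebraic part. Recall the convention that $(T_j)^q$ means $(-T_j)^{|q|}$ when $q<0$. Corollary~\ref{cor:concatmanythue}, applied with $n=q$, gives $\nu(s \circ (T_j)^q) \geq j$. It also gives
\[c_j(s \circ (T_j)^q) = c_j(s) - q\cdot 2^{j(j-1)/2} = r.\]
This holds for every integer $q$, positive, negative or zero.

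Next, the fair-extension claim, by induction on $|q|$ over all sign sequences $s$ with $\nu(s)\ge j$. If $q=0$, then $s \circ (T_j)^0 = s$ is trivially a finite fair extension of itself.

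Suppose $q>0$. Then $c_j(s) \geq 2^{j(j-1)/2} > 0$. Since $\nu(s)\ge j$ and $c_j(s)\neq 0$, we get $\nu(s)=j$ with $c_j(s)>0$. Proposition~\ref{prop:manyshots} then says $t = s\circ T_j$ is a finite fair extension of $s$. Note that $t$ is again a sign sequence. By Lemma~\ref{lem:concatthue} and Corollary~\ref{cor:fairexp}, $\nu(t)\ge j$ and $c_j(t) = c_j(s) - 2^{j(j-1)/2}$. Hence the quotient for $t$ is $q-1$ and the remainder is still $r$. By induction, $t\circ (T_j)^{q-1} = s\circ(T_j)^q$ is a fair extension of $t$. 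Since fair extensions of $t$ are fair extensions of $s$ (as noted after the definition), this is a fair extension of $s$.

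Suppose $q<0$. Then $c_j(s) < 0$, because a negative floor forces a negative numerator. So $\nu(s)=j$ with $c_j(s)<0$, and Proposition~\ref{prop:manyshots} makes $s\circ(-T_j)$ a fair extension. Its $j^\text{th}$ coefficient is $c_j(s)+2^{j(j-1)/2}$, so its quotient is $q+1$ and the remainder is unchanged. The same transitivity argument finishes this case, using associativity of concatenation to write $s\circ(-T_j)\circ(-T_j)^{|q|-1} = s\circ(T_j)^q$.

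The only point needing care is the sign deduction in each step. We must confirm that $q\neq 0$ forces $c_j(s)\ne 0$ with the correct sign, so that Proposition~\ref{prop:manyshots} selects exactly the block we are concatenating. For $q>0$ this uses $c_j(s)\ge 2^{j(j-1)/2}$. For $q<0$ it uses $c_j(s)<0$, which holds because $\lfloor x\rfloor<0$ exactly when $x<0$. Once that is checked, the rest is bookkeeping.
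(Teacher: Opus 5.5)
Your proposal is correct and takes essentially the same route the paper indicates: split by the sign of $c_j(s)$ (equivalently, of $q$), induct on $|q|$, get the fairness-exponent and $j^\text{th}$-coefficient claims from Corollary~\ref{cor:concatmanythue}, and peel off one block at a time using Proposition~\ref{prop:manyshots} plus transitivity of fair extensions. Your explicit check that $q \neq 0$ forces $\nu(s)=j$, with $c_j(s)$ of the sign that selects the block being concatenated, is exactly the detail the paper's one-line sketch leaves implicit.
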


The preceding argument can be formalized by splitting into cases depending on the sign of $c_j(s)$, inducting on the absolute value of $q$, and applying Corollary~\ref{cor:concatmanythue} and Proposition~\ref{prop:manyshots}.

Thus, as long as the remainder is zero, we can eliminate the current fairness coefficient and proceed to the next one. The first nonzero remainder, however, prevents this process from continuing and instead forces the sequence into a periodic loop. Consider what happens for a sequence $s$ with fairness exponent $j$ whose $j^\text{th}$ fairness coefficient is $r \in (0, 2^{j(j-1)/2})$. By Proposition~\ref{prop:manyshots}, Lemma~\ref{lem:concatthue}, and Corollary~\ref{cor:fairexp}, we can see that the $j^\text{th}$ fairness coefficient cycles as follows, with each arrow labeled by the block being appended:
\[r\xrightarrow{\;T_j\;}r-2^{j(j-1)/2}\xrightarrow{\;-T_j\;}r\xrightarrow{\;T_j\;}r-2^{j(j-1)/2}\xrightarrow{\;-T_j\;}r\xrightarrow{\;T_j\;}\cdots .\]
Since $T_j \circ (-T_j) = T_{j+1}$, each full cycle appends one copy of $T_{j+1}$. Thus we continue by concatenating infinitely many copies of $T_{j+1}$ to get the infinite fair extension of $s$. The following corollary formalizes this observation.

\begin{corollary}
\label{cor:infloop}
    Let $s$ be a sign sequence and $j$ be a nonnegative integer such that $s$ has fairness exponent at least $j$. If $c_j(s) \in (0, 2^{j(j-1)/2})$, then $\tilde{s} = s \circ (T_{j+1})^\infty$.
\end{corollary}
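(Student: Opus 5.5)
The plan is to show by induction on $m \geq 0$ that
\[t_m := s \circ (T_{j+1})^m\]
is a finite fair extension of $s$. Since these sequences are nested prefixes of one another, this gives $\tilde{s} = \lim_m t_m = s \circ (T_{j+1})^\infty$. The case $m=0$ is trivial. For the inductive step, the key invariant is that $\nu(t_m) \geq j$ and $c_j(t_m) = r := c_j(s)$.

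The invariant comes from Corollary~\ref{cor:concatmanythue} (or Lemma~\ref{lem:concatthue}) applied twice, using $T_{j+1} = T_j \circ (-T_j)$. Appending $T_j$ leaves the coefficients below $j$ at zero and changes $c_j$ by $-2^{j(j-1)/2}$. Appending $-T_j$ then changes it by $+2^{j(j-1)/2}$. So the net effect of appending $T_{j+1}$ is to preserve $\nu \geq j$ and leave $c_j$ unchanged.

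Now suppose $t_m$ is a fair extension of $s$ with $\nu(t_m) \geq j$ and $c_j(t_m) = r \in (0, 2^{j(j-1)/2})$. Since $r \neq 0$, we have $\nu(t_m) = j$ exactly, and $c_j(t_m) > 0$. Proposition~\ref{prop:manyshots} then shows that $t_m \circ T_j$ is a finite fair extension of $t_m$. By Lemma~\ref{lem:concatthue}, $\nu(t_m \circ T_j) \geq j$ and
\[c_j(t_m \circ T_j) = r - 2^{j(j-1)/2},\]
which is strictly negative because $r < 2^{j(j-1)/2}$. Hence $\nu(t_m \circ T_j) = j$ with a negative coefficient, and Proposition~\ref{prop:manyshots} shows that $t_m \circ T_j \circ (-T_j) = t_{m+1}$ is a fair extension of $t_m \circ T_j$. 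By transitivity of fair extensions, $t_{m+1}$ is a fair extension of $s$, and Lemma~\ref{lem:concatthue} returns $c_j(t_{m+1}) = r$, completing the induction.

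The only delicate point is the strict inequalities. We need $r > 0$ so that $\nu$ equals $j$ exactly and the block $T_j$ (rather than $-T_j$) is chosen. We need $r < 2^{j(j-1)/2}$ so that the sign flips after one copy and $-T_j$ is chosen next. Both are exactly the hypothesis. Finally, the limit of the nested fair extensions $t_m$, whose lengths go to infinity, agrees termwise with $\tilde{s}$, which gives the stated equality.
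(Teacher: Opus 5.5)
Your proposal is correct and follows essentially the same route as the paper: you induct on $m$ to show that $s \circ (T_{j+1})^m$ is a fair extension (hence a prefix) of $\tilde{s}$. The induction uses Proposition~\ref{prop:manyshots} and Lemma~\ref{lem:concatthue} to track the cycle $r \to r - 2^{j(j-1)/2} \to r$ in the $j^\text{th}$ fairness coefficient, and then passes to the limit.
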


The argument above can be formalized by inductively showing that $s \circ (T_{j+1})^n$ is a prefix of $\tilde{s}$ for any nonnegative $n$. This then proves that $s \circ (T_{j+1})^\infty$ agrees with $\tilde{s}$ at every index, so it must be $\tilde{s}$.

To illustrate the power of these corollaries, we use them to compute the infinite fair extension of the sequence $s = (-1, -1, 0, 0, 0, 0, 0, \dots)$.

\begin{example} [Computing the infinite fair extension of the integer sequence corresponding to $AA$] \label{ex:AAcomputation}
    Let $s = (-1, -1, 0, 0, 0, 0, 0, \dots)$, the integer sequence corresponding to the shooting sequence $AA$. We can compute that $c_0(s) = -2$. Dividing this by $2^{0(0-1)/2} = 1$, we get a quotient of $-2$ and a remainder of 0. By Corollary~\ref{cor:repeatblock}, this means that the sequence can be fairly extended by concatenating $(T_0)^{-2}$, and the resulting sequence $s \circ (T_0)^{-2}$ has a $0^\text{th}$ fairness coefficient of 0, giving it a fairness exponent of at least $1$. The $1^\text{st}$ fairness coefficient of $s \circ (T_0)^{-2}$ is $-4$ and $\frac{-4}{2^{1 \cdot 0/2}} = -4$, so applying Corollary~\ref{cor:repeatblock} again gives us that $s \circ (T_0)^{-2} \circ (T_1)^{-4}$ is a fair extension of $s$ with fairness exponent at least $2$. Similarly, the $2^\text{nd}$ fairness coefficient of $s \circ (T_0)^{-2} \circ (T_1)^{-4}$ is $-24$ and $\frac{-24}{2^{2 \cdot 1/2}} = -12$, so $s \circ (T_0)^{-2} \circ (T_1)^{-4} \circ (T_2)^{-12}$ is a fair extension of $s$ with fairness exponent at least $3$.

    Now, something interesting happens. Computing the $3^\text{rd}$ fairness coefficient of $s \circ (T_0)^{-2} \circ (T_1)^{-4} \circ (T_2)^{-12}$ gives $-735$, and $\frac{-735}{2^{3 \cdot 2/2}} = -\frac{735}{8} = -91.875$, which is not an integer! When we apply Corollary~\ref{cor:repeatblock}, the quotient is $-92$ and the remainder is $1$, so we get that $s \circ (T_0)^{-2} \circ (T_1)^{-4} \circ (T_2)^{-12} \circ (T_3)^{-92}$ is a fair extension of $s$ with fairness exponent at least $3$ and $3^\text{rd}$ fairness coefficient equal to 1. Since $1 \in (0, 2^{3\cdot2/2})$, we can invoke Corollary~\ref{cor:infloop} to get that $s \circ (T_0)^{-2} \circ (T_1)^{-4} \circ (T_2)^{-12} \circ (T_3)^{-92}$ has infinite fair extension equal to $s \circ (T_0)^{-2} \circ (T_1)^{-4} \circ (T_2)^{-12} \circ (T_3)^{-92} \circ (T_4)^\infty$. Since this is the infinite fair extension of a fair extension of $s$, it must itself equal the infinite fair extension $\tilde{s}$ of $s$, giving us our final answer of \[\tilde{s} = s \circ (T_0)^{-2} \circ (T_1)^{-4} \circ (T_2)^{-12} \circ (T_3)^{-92} \circ (T_4)^\infty.\] This corresponds to the shooting sequence \[AAB^2(BA)^4(BAAB)^{12}(BAABABBA)^{92}(ABBABAABBAABABBA)^\infty.\]
\end{example}

We now formalize the process of successively eliminating fairness coefficients that we used in this example. Let $s^{(-1)} := s$. For nonnegative integers $n$, define the number $b_n^{\,s}$ and the sequence $s^{(n)}$ recursively as follows:

\begin{definition} [Block counts and $n$-completions] \label{def:completion} Recursively define:
    \begin{itemize}
        \item $b_n^{\,s} = \frac{c_n(s^{(n-1)})}{2^{n(n-1)/2}}$ if $s^{(n-1)}$ is defined, and it is undefined otherwise.
        \item $s^{(n)}$ is undefined if $b_n^{\,s}$ is undefined or not an integer, and is otherwise equal to $s^{(n-1)} \circ (T_n)^{b_n^{\,s}}$.
    \end{itemize}
\end{definition}

When the number $b_n^{\,s}$ is defined, we call it the $n^\text{th}$ block count of $s$. When the sequence $s^{(n)}$ is defined, we call it the $n$-completion of $s$.

For our example of $s = (-1, -1, 0, 0, 0, 0, \dots)$ above, we have:

\begin{center}
\begin{tabular}{l|l|l|l}
    $n$ & $c_n\big(s^{(n-1)}\big)$ & $b_n^{\,s}$ & $s^{(n)}$ \\
    \hline
    0 & $-2$ & $-2$ & $s \circ (T_0)^{-2}$ \\
    1 & $-4$ & $-4$ & $s \circ (T_0)^{-2} \circ (T_1)^{-4}$ \\
    2 & $-24$ & $-12$ & $s \circ (T_0)^{-2} \circ (T_1)^{-4} \circ (T_2)^{-12}$ \\
    3 & $-735$ & $-91.875$ & undefined
\end{tabular}
\end{center}

This illustrates how these block counts and completions correspond naturally to the process used to derive the infinite fair extension of $s$.

Based on Example~\ref{ex:AAcomputation}, one observation we make is that whenever the $n$-completion of $s$ is defined, it is a fair extension of $s$ and has fairness exponent greater than $n$. This turns out to be true in general, as we see in the following proposition. While reading the proofs in this section, the reader may find it helpful to refer to the example.

\begin{proposition}
\label{prop:prefix}
    Let $s$ be a nonzero sign sequence. For an integer $n \geq -1$, if $s^{(n)}$ is defined, it is a fair extension of $s$ with fairness exponent greater than $n$.
\end{proposition}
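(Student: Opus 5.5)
Induction on $n$, starting at $n=-1$. For $n=-1$ we have $s^{(-1)}=s$, which is trivially a fair extension of itself (it is a prefix of $\tilde{s}$ with $s$ as a prefix). Its fairness exponent is some nonnegative integer or $\infty$, so in either case it is greater than $-1$. Here we use that $s$ is nonzero only to ensure that $\nu(s)$ is an honest nonnegative integer.

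For the inductive step, suppose $n \ge 0$ and $s^{(n)}$ is defined. By Definition~\ref{def:completion}, $s^{(n-1)}$ is then defined, $b_n^{\,s}$ is an integer, and
\[s^{(n)} = s^{(n-1)} \circ (T_n)^{b_n^{\,s}}.\]
The inductive hypothesis gives two facts: $s^{(n-1)}$ is a fair extension of $s$, and $\nu(s^{(n-1)}) \ge n$. In particular $s^{(n-1)}$ is a sign sequence, being a prefix of a shooting sequence.

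We now apply Corollary~\ref{cor:repeatblock} to $s^{(n-1)}$ with $j=n$. Since $b_n^{\,s}$ is an integer, the quotient is $q = \lfloor c_n(s^{(n-1)})/2^{n(n-1)/2}\rfloor = b_n^{\,s}$ and the remainder is $r=0$. The corollary therefore says that $s^{(n-1)}\circ(T_n)^{b_n^{\,s}} = s^{(n)}$ is a fair extension of $s^{(n-1)}$, with fairness exponent at least $n$ and $n^\text{th}$ fairness coefficient equal to $0$. Together these give $\nu(s^{(n)}) \ge n+1 > n$.

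One could also reach the same conclusion from Corollary~\ref{cor:concatmanythue}, which directly computes $c_n(s^{(n)}) = c_n(s^{(n-1)}) - b_n^{\,s}2^{n(n-1)/2} = 0$; Corollary~\ref{cor:repeatblock} is still needed for the fairness. Finally, by transitivity of fair extensions (noted after the definition of fair extensions), $s^{(n)}$ is a fair extension of $s$. This completes the induction.

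The only point needing care is that Corollary~\ref{cor:repeatblock} applies only to sign sequences, so each $s^{(n-1)}$ must be one. This follows because fair extensions are prefixes of the infinite shooting sequence $\tilde{s}$ and hence correspond to finite shooting sequences. I expect no real obstacle beyond this bookkeeping.
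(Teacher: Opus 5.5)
Your proof is correct and follows the paper's argument. Both induct from $n=-1$, apply Corollary~\ref{cor:repeatblock} with quotient $b_n^{\,s}$ and remainder $0$ to get that $s^{(n)}$ fairly extends $s^{(n-1)}$ with vanishing $n^\text{th}$ coefficient, and then conclude by transitivity; your remark that each $s^{(n-1)}$ must be a sign sequence for the corollary to apply is a detail the paper leaves implicit.
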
 

\begin{proof}
    We shall prove this by induction on $n$. The base case of $n = -1$ is true since $s^{(-1)} = s$, which has a nonnegative fairness exponent and is a fair extension of itself.

    For the inductive step, assume that $s^{(n)}$ is defined and that $s^{(n-1)}$ is a fair extension of $s$ with fairness exponent greater than $n-1$. We shall prove that $s^{(n)}$ is a fair extension of $s$ with fairness exponent greater than $n$. Consider the block count $b_n^{\,s} = \frac{c_n(s^{(n-1)})}{2^{n(n-1)/2}}$. This must be an integer since $s^{(n)}$ is defined. This means that the quotient of the division $\frac{c_n(s^{(n-1)})}{2^{n(n-1)/2}}$ equals $b_n^{\,s}$ and the remainder equals 0. Since the fairness exponent of $s^{(n-1)}$ is at least $n$, by Corollary~\ref{cor:repeatblock}, we have that $s^{(n-1)} \circ (T_n)^{b_n^{\,s}} = s^{(n)}$ is a fair extension of $s^{(n-1)}$. In addition, $s^{(n)}$ has fairness exponent at least $n$ and $n^\text{th}$ fairness coefficient zero, so it must have fairness exponent greater than $n$. Since $s^{(n)}$ fairly extends $s^{(n-1)}$ and $s^{(n-1)}$ fairly extends $s$, we have that $s^{(n)}$ is a fair extension of $s$ with fairness exponent greater than $n$. This proves the inductive step.
\end{proof}

We are now ready to state and prove our first theorem describing the structure of $\tilde{s}$. We demonstrate that this structure differs fundamentally based on whether the $i^\text{th}$ block count of $s$ is non-integral for some nonnegative integer $i$. In the case where such an $i$ exists, the proof proceeds similarly to Example~\ref{ex:AAcomputation}.

\begin{theorem}
\label{thm:fullseq}
    Let $s$ be a nonzero sign sequence. If $s^{(n)}$ is defined for all $n$, then $\tilde{s} = \lim_{n \rightarrow \infty} s^{(n)}$. If not, let $i$ be the smallest nonnegative integer such that $s^{(i)}$ is undefined. Then \[\tilde{s}=s^{(i-1)} \circ (T_i)^{\lfloor b_i^{\,s} \rfloor} \circ (T_{i+1})^\infty.\]
\end{theorem}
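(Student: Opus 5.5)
The plan splits into the two cases of the statement.

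\emph{Case 1: $s^{(n)}$ is defined for every $n$.} By Proposition~\ref{prop:prefix}, each $s^{(n)}$ is a finite fair extension of $s$, so each is a prefix of $\tilde{s}$. The completions are also nested, since $s^{(n)} = s^{(n-1)} \circ (T_n)^{b_n^{\,s}}$ has $s^{(n-1)}$ as a prefix.

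The only nontrivial point is that the lengths $\ell(s^{(n)})$ tend to infinity. Without this, the termwise limit would be a finite sequence and could not equal the infinite sequence $\tilde{s}$. I would handle this in one of two ways.

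One way uses parity. By Equation~\eqref{eq:coeffmod2}, $c_i(t) \equiv \binom{\ell(t)}{i+1} \pmod 2$. A sign sequence $t$ with $\nu(t) > n$ has $c_0(t)=\dots=c_n(t)=0$. So $\binom{\ell(t)}{i+1}$ is even for all $i \le n$, and in particular $\ell(t) \ge n+1$; a direct argument, taking $i+1=\ell(t)$, gives $\ell(t) > n+1$.

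The other way is simpler: $F_t$ has degree $\ell(t)-1$ and is divisible by $x^{n+1}$, so $\ell(t)-1 \ge n+1$.

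Either way, $\ell(s^{(n)}) \to \infty$. Since every $s^{(n)}$ agrees with $\tilde{s}$ on its support, the limit $\lim_{n\to\infty} s^{(n)}$ exists termwise and equals $\tilde{s}$.

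\emph{Case 2: some $s^{(i)}$ is undefined.} Let $i$ be the least such index. Then $s^{(i-1)}$ is defined; for $i=0$ this is $s^{(-1)}=s$. By Proposition~\ref{prop:prefix}, $s^{(i-1)}$ is a fair extension of $s$ with $\nu(s^{(i-1)}) \ge i$.

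The block count $b_i^{\,s}$ is defined, because $s^{(i-1)}$ is. Since $s^{(i)}$ is undefined, $b_i^{\,s}$ must be a non-integer.

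Apply Corollary~\ref{cor:repeatblock} to $s^{(i-1)}$ with $j=i$. The quotient is $q = \lfloor b_i^{\,s}\rfloor$, and the remainder $r$ satisfies $0 < r < 2^{i(i-1)/2}$, where $r \neq 0$ precisely because $b_i^{\,s}$ is not an integer. The corollary gives that $t := s^{(i-1)} \circ (T_i)^{q}$ is a fair extension of $s^{(i-1)}$, and hence of $s$. It also gives $\nu(t)\ge i$ and $c_i(t) = r \in (0,2^{i(i-1)/2})$.

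Corollary~\ref{cor:infloop} then yields $\tilde{t} = t \circ (T_{i+1})^\infty$. Since $t$ is a finite fair extension of $s$, we have $\tilde{s} = \tilde{t}$, which is the claimed formula.

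One edge case needs a check. When $i=0$, the divisor is $2^{0}=1$, so $b_0^{\,s}=c_0(s)$ is always an integer and $s^{(0)}$ is always defined. Hence $i\ge1$ automatically, and the interval $(0, 2^{i(i-1)/2})$ causes no trouble.

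The main obstacle is the length-growth step in Case 1, and the degree bound on $F_t$ disposes of it cleanly. Everything else is bookkeeping with Corollaries~\ref{cor:repeatblock} and~\ref{cor:infloop}.
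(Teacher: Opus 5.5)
Your proof is correct and follows the paper's argument essentially step for step. In Case~1 the paper also uses that $F_{s^{(n)}}$ has degree $\ell(s^{(n)})-1$ and vanishes to order greater than $n$, giving $\ell(s^{(n)})>n+1$; in Case~2 it likewise applies Corollary~\ref{cor:repeatblock} to $s^{(i-1)}$ with a nonzero remainder and then invokes Corollary~\ref{cor:infloop}. Your alternative parity route (taking $i+1=\ell(t)$ in Equation~\eqref{eq:coeffmod2}) is also valid. The $i\ge 1$ edge-case check is harmless but unnecessary, since non-integrality of $b_i^{\,s}$ already forces $0<r<2^{i(i-1)/2}$.
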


\begin{proof}
    The proof of this theorem can be split into the following two cases.

    Case 1: $s^{(n)}$ is defined for all $n$. By Proposition~\ref{prop:prefix}, the sequence $s^{(n)}$ is a prefix of $\tilde{s}$ (by virtue of being a finite fair extension of $s$) and has fairness exponent greater than $n$. This implies that the degree of its fairness polynomial, which is $\ell(s^{(n)})-1$, is also greater than $n$, implying that $\ell(s^{(n)}) > n+1$. Since the sequences $s^{(n)}$ are prefixes of $\tilde{s}$ that grow arbitrarily long, their limit equals $\tilde{s}$.

    Case 2: There exists some smallest nonnegative integer $i$ such that $s^{(i)}$ is undefined. Since $s^{(i-1)}$ is defined, $b_i^{\,s}$ is defined, so it cannot be an integer (or else $s^{(i)}$ would also be defined). Since $b_i^{\,s} = \frac{c_i(s^{(i-1)})}{2^{i(i-1)/2}}$, the quotient of dividing $c_i(s^{(i-1)})$ by $2^{i(i-1)/2}$ is $\lfloor b_i^{\,s}\rfloor$ and the remainder $r$ is an integer that satisfies $0 \leq r < 2^{i(i-1)/2}$. The fact that $\frac{c_i(s^{(i-1)})}{2^{i(i-1)/2}}$ is not an integer implies that $r \neq 0$. By Proposition~\ref{prop:prefix}, we see that $s^{(i-1)}$ has fairness exponent at least $i$. Therefore, by Corollary~\ref{cor:repeatblock}, we have that $s^{(i-1)} \circ (T_i)^{\lfloor b_i^{\,s} \rfloor}$ is a fair extension of $s^{(i-1)}$ with a fairness exponent of at least $i$ and whose $i^\text{th}$ fairness coefficient is equal to $r$. By Corollary~\ref{cor:infloop}, we then get that the infinite fair extension of $s^{(i-1)} \circ (T_i)^{\lfloor b_i^{\,s} \rfloor}$ is $s^{(i-1)} \circ (T_i)^{\lfloor b_i^{\,s} \rfloor} \circ (T_{i+1})^\infty$. Since $s^{(i-1)}$ is a fair extension of $s$ by Proposition~\ref{prop:prefix}, the sequence $s^{(i-1)} \circ (T_i)^{\lfloor b_i^{\,s} \rfloor} \circ (T_{i+1})^\infty$ must equal the infinite fair extension $\tilde{s}$ of $s$, proving the theorem.
\end{proof}

Using Theorem~\ref{thm:fullseq}, we can provide an alternate proof of Cooper and Dutle's original result as follows. Let $s$ be the sequence $T_0$. We will prove the following claim, describing the completions of $s$. 

\begin{claim*}
    For all $n \geq 0$, we have $b_n^{\,s} = -1$ and $s^{(n)} = T_{n+1}$.
\end{claim*}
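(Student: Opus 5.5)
We argue by induction on $n$, starting from $s = T_0 = (-1,0,0,\dots)$ and using the recursion in Definition~\ref{def:completion}. For the base case $n=0$, we have $s^{(-1)} = s = T_0$. By Lemma~\ref{lem:coefs} with $j=k=0$, $c_0(T_0) = -2^{0} = -1$, so $b_0^{\,s} = c_0(T_0)/2^{0} = -1$. This is an integer, hence $s^{(0)} = T_0 \circ (T_0)^{-1} = T_0 \circ (-T_0)$. By the identity $T_j \circ (-T_j) = T_{j+1}$ recorded in Section~\ref{sec:ComputingManyNextShots}, this equals $T_1$.

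For the inductive step, assume that $s^{(n-1)} = T_n$ for some $n \geq 1$; the case $n-1=0$ is the base case just done. Then $b_n^{\,s} = c_n(T_n)/2^{n(n-1)/2}$. Lemma~\ref{lem:coefs} with $j=n$ and $k=0$ gives $c_n(T_n) = -2^{n(n-1)/2}$, so $b_n^{\,s} = -1$, again an integer. Hence $s^{(n)}$ is defined and
\[ s^{(n)} = s^{(n-1)} \circ (T_n)^{-1} = T_n \circ (-T_n) = T_{n+1}. \]
This closes the induction. We should double-check that $(T_n)^{-1}$ means $(-T_n)^1 = -T_n$ under the exponent convention, which it does by definition.

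There is no real obstacle: the only inputs are Lemma~\ref{lem:coefs}, which gives both the fairness exponent and the exact lowest coefficient of $T_n$ and hence makes every block count exactly $-1$, and the doubling identity for Thue--Morse blocks. Afterwards, to recover Cooper and Dutle's result, we combine the claim with Case~1 of Theorem~\ref{thm:fullseq}. Since all completions are defined, $\widetilde{T_0} = \lim_{n\to\infty} T_{n+1} = T_\infty$. Starting from the zero sequence, Alice shoots first and produces $T_0$, so the fair sequence is the signed Thue--Morse sequence. Applying the negation symmetry $F_{-s} = -F_s$, the same argument shows that starting from $-T_0$ yields $-T_\infty$.
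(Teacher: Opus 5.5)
Your proof is correct and follows the paper's argument essentially verbatim. You induct on $n$, use Lemma~\ref{lem:coefs} to get $c_n(T_n) = -2^{n(n-1)/2}$ so that every block count is exactly $-1$, and close each step with $T_n \circ (-T_n) = T_{n+1}$. The only cosmetic difference is that you also cite Lemma~\ref{lem:coefs} in the base case, where the paper computes $c_0(T_0) = -1$ directly.
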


\begin{proof}
    We shall prove this by induction on $n$.

    Base case: $n = 0$. In this case, $b_n^{\,s} = \frac{c_n(s^{(n-1)})}{2^{n(n-1)/2}} = c_0(T_0) = -1$ and $s^{(n)} = s^{(n-1)} \circ (T_n)^{b_n^{\,s}} = T_0 \circ (-T_0) = T_1$.

    Inductive step: Assume that $s^{(n-1)} = T_{n}$. We shall prove that $b_{n}^{\,s} = -1$ and $s^{(n)} = T_{n+1}$. We have that $b_{n}^{\,s} =\frac{c_n(s^{(n-1)})}{2^{n(n-1)/2}} = \frac{c_n(T_n)}{2^{n(n-1)/2}}$, which equals $-1$ by Lemma~\ref{lem:coefs}. This gives us that the $n$-completion $s^{(n)} = s^{(n-1)} \circ (T_n)^{b_n^{\,s}} = T_n \circ (-T_n) = T_{n+1}$, proving the inductive step.
\end{proof}

From this, we derive that $\tilde{s} = \lim_{n \rightarrow \infty}s^{(n)} = \lim_{n \rightarrow \infty} T_{n+1} = T_\infty$, which corresponds to the Thue--Morse shooting sequence. This proves Cooper and Dutle's result.

One might also wonder how many completions are guaranteed to exist. We prove that every nonzero sign sequence has a 2-completion; as we illustrated above, the sequence corresponding to $AA$ does not have a 3-completion, so this bound is tight.

\begin{lemma} \label{lem:2completion}
    If $s$ is a nonzero sign sequence, then $s^{(0)}$, $s^{(1)}$, and $s^{(2)}$ are defined.
\end{lemma}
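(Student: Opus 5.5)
We need to show that $b_0^{\,s}$, $b_1^{\,s}$ and $b_2^{\,s}$ are integers; the completions are then defined by Definition~\ref{def:completion}. The denominators are $2^{0}=1$, $2^{0}=1$ and $2^{1}=2$. So $b_0^{\,s}=c_0(s)$ and $b_1^{\,s}=c_1(s^{(0)})$ are automatically integers, which gives $s^{(0)}$ and $s^{(1)}$. The whole content of the lemma is therefore that $c_2(s^{(1)})$ is even.

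For the parity I will use Equation~\eqref{eq:coeffmod2}. The completions $s^{(0)}$ and $s^{(1)}$ are concatenations of $s$ with copies of $\pm T_0$ and $\pm T_1$, so they are still sign sequences: every entry up to the length is $\pm1$, and the entries after it are $0$. Hence $c_2(s^{(1)})\equiv\binom{L}{3}\pmod 2$, where $L=\ell(s^{(1)})$, and it suffices to show that $\binom{L}{3}$ is even.

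To control $L$, I will use the fact that $s^{(1)}$ has fairness exponent greater than $1$ (Proposition~\ref{prop:prefix}), so $c_0(s^{(1)})=c_1(s^{(1)})=0$. Applying Equation~\eqref{eq:coeffmod2} to these two coefficients gives
\[
L=\binom{L}{1}\equiv 0 \pmod 2
\quad\text{and}\quad
\binom{L}{2}\equiv 0 \pmod 2 .
\]
The second congruence means $L(L-1)/2$ is even, i.e. $L\equiv 0$ or $1\pmod 4$. Combined with $L$ even, this forces $L\equiv 0\pmod 4$.

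Now write $L=4m$. By Lucas' theorem, $\binom{L}{3}$ is odd only if the two lowest binary digits of $L$ are both $1$, which fails here. Alternatively, directly,
\[
\binom{4m}{3}=\frac{4m(4m-1)(4m-2)}{6}=\frac{4m(4m-1)(2m-1)}{3},
\]
whose numerator is divisible by $4$ while the denominator $3$ is odd, so the quotient is even. Thus $c_2(s^{(1)})$ is even, $b_2^{\,s}$ is an integer, and $s^{(2)}$ is defined.

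The main obstacle is noticing that the parity formula applies to the intermediate completions, since they remain sign sequences. Once that is seen, the vanishing of $c_0$ and $c_1$ pins $L$ down modulo $4$, and everything else is routine. The same argument explains why the lemma stops at $s^{(2)}$: for $s^{(3)}$ the divisor becomes $2^{3}=8$, and parity information alone no longer suffices.
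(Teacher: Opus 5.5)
Your proof is correct and follows the paper's route. As in the paper, $b_0^{\,s}$ and $b_1^{\,s}$ are automatically integers, and applying the parity formula \eqref{eq:coeffmod2} to the sign sequence $s^{(1)}$, with $c_0(s^{(1)})=0$ forcing $\ell(s^{(1)})$ to be even, shows that $c_2(s^{(1)})$ is even. Your extra use of $c_1(s^{(1)})=0$ to get $\ell(s^{(1)})\equiv 0 \pmod 4$ is valid but unnecessary, since $\binom{L}{3}$ is already even whenever $L$ is even.
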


\begin{proof}
    The block count $b_0^{\,s}$ is $\frac{c_0(s)}{1}$, so $b_0^{\,s}$ is an integer, implying that $s^{(0)}$ is defined. Likewise, $b_1^{\,s}$ is $\frac{c_1(s^{(0)})}{1}$, so $s^{(1)}$ is also defined. By Proposition~\ref{prop:prefix}, the fairness exponent of $s^{(1)}$ is at least 2, so $c_0(s^{(1)}) = 0$. Since Equation \eqref{eq:coeffmod2} applied to $i=0$ implies that $c_0(s^{(1)}) \equiv \ell\big(s^{(1)}\big) \pmod{2}$, we have that $\ell\big(s^{(1)}\big) = 2m$ for some $m$. The expression $\binom{\ell(s^{(1)})}{3} = \binom{2m}{3}$ is even because it can be shown to equal $4\big(\binom{m}{3} + \binom{m+1}{3}\big)$. Equation \eqref{eq:coeffmod2} applied to $i=2$ then implies that $c_2(s^{(1)}) \equiv \binom{\ell(s^{(1)})}{3} \pmod{2}$, so $c_2(s^{(1)})$ is also even, implying that $b_2^{\,s} = \frac{c_2(s^{(1)})}{2}$ is an integer. Therefore, $s^{(2)}$ is also defined, proving the claim.
\end{proof}

\section{The Thue--Morse Morphism} \label{sec:TheThueMorseMorphism}

Now that we have described how periodicity arises, we turn to determining which shooting-sequence prefixes give rise to an eventually periodic shooting sequence, thereby proving the main theorem stated in the introduction. To do this, we first illustrate and prove another phenomenon exhibited by these shooting sequences. 

Let $s$ be the integer sequence corresponding to the shooting sequence $AA$, as in Example~\ref{ex:AAcomputation}. Consider the integer sequence $t$ corresponding to the shooting sequence $ABAB$, which is obtained from $AA$ by the substitution rule $A \rightarrow AB, B \rightarrow BA$. Computing the infinite fair extensions of $s$ and $t$ using Theorem~\ref{thm:fullseq}, we have
\begin{align*}
    \tilde{s} &= s\circ(T_0)^{-2}\circ(T_1)^{-4}\circ(T_2)^{-12}\circ(T_3)^{-92}\circ(T_4)^{\infty}, \\
    \tilde{t} &= t\circ(T_1)^{-2}\circ(T_2)^{-4}\circ(T_3)^{-12}\circ(T_4)^{-92}\circ(T_5)^{\infty}.
\end{align*}

Interestingly, the exponents of the blocks are the same and the indices of the blocks are shifted by 1. This turns out to hold in general, which we will prove in this section. Applying this substitution rule in reverse to a sequence built from $AB$ and $BA$ pairs gives a sequence of half the length. We will show that one sequence has all completions if and only if the other does, allowing us to reduce one case of the main theorem to a shorter sequence. To state this precisely, we first give this substitution rule a name.

\begin{definition} [Thue--Morse morphism]
For an integer sequence $s$, define the sequence $s^\psi$ as the integer sequence satisfying $(s^\psi)_{2n}=s_n$ and $(s^\psi)_{2n+1}=-s_n$ for every nonnegative integer $n$. Call this sequence the \emph{Thue--Morse morphism} of $s$.
\end{definition}
This coincides with the standard definition of the Thue--Morse morphism on sequences consisting of only the terms $+1$ and $-1$ and corresponds to the replacement rule $A \rightarrow AB$, $B \rightarrow BA$ that we used to derive $t$ from $s$ \cite{AS}.

The Thue--Morse morphism interacts nicely with all the sequence operations defined in Section~\ref{sec:ComputingManyNextShots}. It is a linear operator: in particular, $(s+t)^\psi = s^\psi + t^\psi$ and $(-s)^\psi=-(s^\psi)$. From the definition of length, one can also see that it doubles lengths: $\ell(s^\psi) = 2\ell(s)$. We can also see that it interacts well with the prefix relation: if $s$ is a prefix of $t$, then $s^\psi$ is a prefix of $t^\psi$. In addition, it plays nicely with shifts: $\big(s^{\xrightarrow{k}}\big)^\psi = (s^\psi)^{\xrightarrow{2k}}$. From this, using the definition of $s \circ t$ as $s + t^{\xrightarrow{\ell(s)}}$, we can get that $(s \circ t)^\psi = s^\psi \circ t^\psi$. We can then prove that for any integer $n$, the equality $\big(s^n\big)^\psi = \big(s^\psi\big)^n$ holds. (The proof proceeds by induction on $|n|$.) This implies, by taking limits, that $\big(s^\infty\big)^\psi = \big(s^\psi\big)^\infty$ and $\big(s^{-\infty}\big)^\psi = \big(s^\psi\big)^{-\infty}$. We also note that an integer sequence is a sign sequence if and only if its Thue--Morse morphism is a sign sequence.

Using the above properties, we can derive the following lemma.

\begin{lemma} \label{lem:thueexpansions}
We have that $(T_j)^\psi = T_{j+1}$.
\end{lemma}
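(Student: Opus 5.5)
We argue by induction on $j$, using only the listed properties of the morphism: linearity, compatibility with negation, and the identity $(s\circ t)^\psi = s^\psi \circ t^\psi$.

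For the base case $j=0$, we compute directly from the definition. Since $T_0 = (-1,0,0,\dots)$, the morphism gives $(T_0^\psi)_0 = -1$ and $(T_0^\psi)_1 = 1$, with all later terms $0$. This is $(-1,1,0,\dots) = T_1$.

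For the inductive step, assume $(T_j)^\psi = T_{j+1}$. We use the identity $T_{j+1} = T_j \circ (-T_j)$ from Section~\ref{sec:ComputingManyNextShots}; it holds because $\ell(T_j)=2^j$, so the concatenation coincides with the recursive definition $T_j - (T_j)^{\xrightarrow{2^j}}$. Applying the morphism, we get
\[(T_{j+1})^\psi = (T_j)^\psi \circ (-T_j)^\psi = (T_j)^\psi \circ \bigl(-(T_j)^\psi\bigr) = T_{j+1}\circ(-T_{j+1}) = T_{j+2}.\]
The last equality is the same concatenation identity applied at index $j+1$.

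There is also a direct route that avoids concatenation. Since $s^\psi$ has generating function $G_s(x^2)(1-x)$, we get $G_{T_j^\psi}(x) = -(1-x)\prod_{i=0}^{j-1}(1-x^{2^{i+1}})$, which is exactly $G_{T_{j+1}}(x)$.

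The only point that needs care is whether $(s\circ t)^\psi = s^\psi\circ t^\psi$ is genuinely available here. It follows from $\ell(s^\psi)=2\ell(s)$ and $\bigl(s^{\xrightarrow{k}}\bigr)^\psi = (s^\psi)^{\xrightarrow{2k}}$, both of which are stated before the lemma. If we prefer not to rely on it, the generating-function computation serves as a self-contained backup.
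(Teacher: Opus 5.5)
Your induction is correct and matches the paper's proof: the same base-case computation $(T_0)^\psi=(-1,1,0,\dots)=T_1$, and the same inductive step through $T_{j+1}=T_j\circ(-T_j)$ and the compatibility of $\psi$ with concatenation and negation. Your generating-function backup via $G_{s^\psi}(x)=(1-x)G_s(x^2)$ is also a valid self-contained proof; the paper derives this identity directly from the definition just after the lemma, so using it here would not be circular.
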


\begin{proof}
    We prove this by induction on $j$. For the base case $j = 0$, we have that $(T_0)^\psi = (-1, 1, 0, 0, \dots) = T_1$. For the inductive step, assume that $(T_j)^\psi = T_{j+1}$. We shall prove that $(T_{j+1})^\psi = T_{j+2}$. Since the Thue--Morse morphism respects concatenation and negation and $T_{j+1}=T_j\circ(-T_j)$, we can see that $(T_{j+1})^\psi=(T_j\circ(-T_j))^\psi=(T_j)^\psi\circ(-(T_j)^\psi)=T_{j+1}\circ(-T_{j+1}) =T_{j+2}$. This completes the proof of the inductive step, proving the claim.
\end{proof}

From this, by taking limits, we see that 
\begin{equation}\label{eq:psithue}
    (T_\infty)^\psi = T_\infty.
\end{equation} This proves that our definition of the Thue--Morse sequence is equivalent to the definition of \cite{AS} with the zeros replaced by $-1$s. For context, \cite{AS} defines the Thue--Morse sequence $t$ as the unique sequence with $t_0 = 0$, $t_{2n}=t_n$ for all nonnegative $n$, and $t_{2n+1}=1-t_n$ for all nonnegative $n$. Replacing zeros with $-1$s, we get the unique sequence starting with $-1$ that is invariant under the Thue--Morse morphism.

Our main goal in this section is to generalize the observation made at the start of the section, giving a tool we use to prove the main theorem. In order to do that, we first investigate how the Thue--Morse morphism interacts with the generating functions we have defined. We start by considering $G_{s^\psi}$. By the definition of $G_{s^\psi}$, we have $G_{s^\psi}(x) = \sum_{n=0}^\infty (s^\psi)_nx^n$. Breaking this sum up by odd and even $n$, we get \[G_{s^\psi}(x) = \sum_{n=0}^\infty (s^\psi)_{2n}x^{2n} + \sum_{n=0}^\infty (s^\psi)_{2n+1}x^{2n+1}.\] Now, by using the definition of $s^\psi$, we can see that this simplifies to
\[G_{s^\psi}(x) = \sum_{n=0}^\infty s_nx^{2n} - \sum_{n=0}^\infty s_nx^{2n+1} = (1-x)\sum_{n=0}^\infty s_nx^{2n} = (1-x)G_s(x^2).\]

We now turn our attention to the polynomial $F_{s^\psi}(x)$. Substituting $1-x$ for $x$ in the identity for $G_{s^\psi}$ and using the definition of the fairness polynomial gives
\begin{equation} \label{eq:transformedfairpoly}
F_{s^\psi}(x) = xG_s((1-x)^2) = xF_s(2x-x^2).
\end{equation}

For $s \neq 0$, the lowest-degree term of $F_s(y)$ is $c_{\nu(s)}(s)\,y^{\nu(s)}$, so substituting $y = 2x-x^2$ gives 
\newline
$c_{\nu(s)}(s)\,(2x-x^2)^{\nu(s)} = c_{\nu(s)}(s)\,x^{\nu(s)}(2-x)^{\nu(s)} = 2^{\nu(s)}c_{\nu(s)}(s)\,x^{\nu(s)} + (\text{higher-degree terms})$.
Every other term $c_i(s)y^i$ of $F_s(y)$ has $i > \nu(s)$, so it contributes $c_i(s)x^i(2-x)^i$, which has no terms of degree $\leq \nu(s)$. Hence $F_s(2x-x^2)$ has lowest-degree term $2^{\nu(s)}c_{\nu(s)}(s)x^{\nu(s)}$, so $F_{s^\psi}(x) = xF_s(2x-x^2)$ has lowest-degree term $2^{\nu(s)}c_{\nu(s)}(s)x^{\nu(s)+1}$. This gives us the following two facts:
\begin{equation} \label{eq:transformedfairexp}
    \nu(s^\psi) = \nu(s) + 1,
\end{equation}
\begin{equation} \label{eq:transformedlowcoeff}
    c_{\nu(s^\psi)}(s^\psi) = 2^{\nu(s)}c_{\nu(s)}(s).
\end{equation}

These facts together imply that for nonnegative $i \leq \nu(s)$, we have
\begin{equation} \label{eq:combined}
    c_{i+1}(s^\psi) = 2^ic_{i}(s),
\end{equation}
since both sides reduce to 0 for $i < \nu(s)$ and the $i = \nu(s)$ case is Equation \eqref{eq:transformedlowcoeff}.

Note that by Equation \eqref{eq:transformedfairexp}, we have $\nu(s^\psi) > 0$, so $c_0(s^\psi) = 0$, implying that $b_0^{\,s^\psi} = 0$. We now generalize the observation made at the beginning of the section, describing how the Thue--Morse morphism interacts with completions and block counts.

\begin{proposition} \label{prop:transform}
    Let $s$ be a nonzero sign sequence. For every nonnegative integer $n$, the completion $\big(s^\psi\big)^{(n)}$ is defined if and only if $s^{(n-1)}$ is defined, in which case $\big(s^\psi\big)^{(n)} = \big(s^{(n-1)}\big)^\psi$ and $b_{n+1}^{\,s^\psi} = b_n^{\,s}$.
\end{proposition}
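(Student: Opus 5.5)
Induction on $n \geq 0$. The key identity is Equation~\eqref{eq:combined}, $c_{i+1}(t^\psi) = 2^i c_i(t)$ for $i \le \nu(t)$, combined with Lemma~\ref{lem:thueexpansions} and the fact that $\psi$ respects concatenation and powers.

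\emph{Base case $n=0$.} The completion $s^{(-1)} = s$ is always defined. Since $c_0(s^\psi)=0$, we have $b_0^{\,s^\psi}=0$, so $(s^\psi)^{(0)} = s^\psi \circ (T_0)^0 = s^\psi = (s^{(-1)})^\psi$, which is defined. For the block count we use $\nu(s)\ge 0$:
\[b_1^{\,s^\psi} = c_1\big((s^\psi)^{(0)}\big)/2^0 = c_1(s^\psi) = 2^0 c_0(s) = b_0^{\,s}.\]

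\emph{Inductive step.} Assume the statement for $n$; we prove it for $n+1$. We must show that $(s^\psi)^{(n+1)}$ is defined iff $s^{(n)}$ is, and in that case compare the objects.

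First suppose $s^{(n-1)}$ is undefined. By the hypothesis $(s^\psi)^{(n)}$ is undefined, so both $s^{(n)}$ and $(s^\psi)^{(n+1)}$ are undefined.

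Otherwise $s^{(n-1)}$ is defined and $(s^\psi)^{(n)} = (s^{(n-1)})^\psi$. By definition $b_{n+1}^{\,s^\psi}$ is then defined. By hypothesis $b_{n+1}^{\,s^\psi} = b_n^{\,s}$, so one is an integer iff the other is, and hence $(s^\psi)^{(n+1)}$ is defined iff $s^{(n)}$ is. When both are defined,
\[(s^\psi)^{(n+1)} = (s^{(n-1)})^\psi \circ (T_{n+1})^{b_n^{\,s}} = (s^{(n-1)})^\psi \circ \big((T_n)^{b_n^{\,s}}\big)^\psi = \big(s^{(n-1)}\circ (T_n)^{b_n^{\,s}}\big)^\psi = (s^{(n)})^\psi.\]
This uses Lemma~\ref{lem:thueexpansions}, $(s^m)^\psi = (s^\psi)^m$, and $(s\circ t)^\psi = s^\psi\circ t^\psi$.

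For the block count, Proposition~\ref{prop:prefix} gives $\nu(s^{(n)}) > n$, so Equation~\eqref{eq:combined} applies with $i = n+1 \le \nu(s^{(n)})$:
\[b_{n+2}^{\,s^\psi} = \frac{c_{n+2}\big((s^{(n)})^\psi\big)}{2^{(n+2)(n+1)/2}} = \frac{2^{n+1}c_{n+1}(s^{(n)})}{2^{(n+2)(n+1)/2}} = \frac{c_{n+1}(s^{(n)})}{2^{(n+1)n/2}} = b_{n+1}^{\,s}.\]
The exponents agree because $(n+2)(n+1)/2 - (n+1) = n(n+1)/2$. This completes the induction.

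The main obstacle is checking that Equation~\eqref{eq:combined} may be applied. It requires $i \le \nu(s^{(n)})$, which is exactly what Proposition~\ref{prop:prefix} provides, and this needs $s^{(n)}$ to be a nonzero sign sequence. That holds because $s^{(n)}$ is a fair extension of the nonzero $s$. The remaining care is index bookkeeping: the statement at stage $n$ must carry both the completion equality and the block-count equality, since the block count $b_{n+1}$ is what decides whether the completion at stage $n+1$ exists.
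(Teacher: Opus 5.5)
Your proof is correct and follows the paper's argument essentially step for step. It uses the same induction, with the base case handled via $b_0^{\,s^\psi}=0$ and Equation~\eqref{eq:combined}, and the same split on whether the previous completion is defined. It also relies on the same combination of Lemma~\ref{lem:thueexpansions}, the compatibility of $\psi$ with concatenation and powers, and Proposition~\ref{prop:prefix} to license Equation~\eqref{eq:combined}; the only difference is that you write the inductive step as $n \to n+1$ rather than $n-1 \to n$.
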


The idea of the proof is that $\psi$ sends $T_n$ to $T_{n+1}$ and scales the relevant fairness coefficient by $2^n$, which is exactly the factor needed to leave the block count unchanged.

\begin{proof}
We prove the claim by induction on $n$. For $n=0$, we have $s^{(-1)}=s$ and $b_0^{\,s^\psi}=0$, so we have $\big(s^\psi\big)^{(0)}= s^\psi= \big(s^{(-1)}\big)^\psi$. By Equation~\eqref{eq:combined}, we have $b_1^{\,s^\psi}= c_1(s^\psi)= c_0(s)= b_0^{\,s}$. This proves the base case.

Now let $n \geq 1$ and assume the claim holds for $n-1$. If $s^{(n-2)}$ is undefined, then $\big(s^\psi\big)^{(n-1)}$ is also undefined by the inductive hypothesis. Consequently, neither $s^{(n-1)}$ nor $\big(s^\psi\big)^{(n)}$ is defined, proving the claim in this case. Otherwise, the inductive hypothesis gives $\big(s^\psi\big)^{(n-1)} = \big(s^{(n-2)}\big)^\psi$ and $b_n^{\,s^\psi}=b_{n-1}^{\,s}$. By the recursive definition of completions, $\big(s^\psi\big)^{(n)}$ and $s^{(n-1)}$ are therefore defined precisely when this common block count is an integer. When these completions are defined, the properties of $\psi$, Lemma~\ref{lem:thueexpansions}, and the inductive hypothesis give $\big(s^\psi\big)^{(n)} = \big(s^\psi\big)^{(n-1)} \circ (T_n)^{b_n^{\,s^\psi}} = \big(s^{(n-2)}\big)^\psi \circ (T_n)^{b_{n-1}^{\,s}} = \big(s^{(n-2)} \circ (T_{n-1})^{b_{n-1}^{\,s}}\big)^\psi = \big(s^{(n-1)}\big)^\psi$.

Proposition~\ref{prop:prefix} gives $\nu\big(s^{(n-1)}\big) \geq n$, so applying Equation~\eqref{eq:combined} to $s^{(n-1)}$ and using the equality just proved, we obtain $b_{n+1}^{\,s^\psi} = \frac{c_{n+1}\big((s^\psi)^{(n)}\big)}{2^{n(n+1)/2}} = \frac{c_{n+1}\big((s^{(n-1)})^\psi\big)}{2^{n(n+1)/2}} = \frac{2^n c_n\big(s^{(n-1)}\big)}{2^{n(n+1)/2}} = \frac{c_n\big(s^{(n-1)}\big)}{2^{n(n-1)/2}} = b_n^{\,s}$. This completes the induction.
\end{proof}

From this, we derive the following corollary, which explains the phenomenon at the start of the section. This corollary can also be understood directly from Cooper and Dutle's approach: Equation~\eqref{eq:transformedfairpoly} shows that the fairness decisions for $s^\psi$ mirror those for $s$. We instead prove the result using completions, since that approach is more closely tied to the machinery developed above.

\begin{corollary} \label{cor:transform}
    For a nonzero sign sequence $s$, we have $(\tilde{s})^\psi = \widetilde{s^\psi}$.
\end{corollary}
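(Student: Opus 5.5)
The plan is to apply Theorem~\ref{thm:fullseq} to both $s$ and $s^\psi$, and to use Proposition~\ref{prop:transform} to match the two resulting descriptions term by term. Throughout we use that $\psi$ commutes with concatenation, powers (including $\infty$ and $-\infty$), and termwise limits, and that $(T_j)^\psi = T_{j+1}$ by Lemma~\ref{lem:thueexpansions}. Note also that $s^\psi$ is a nonzero sign sequence, so Theorem~\ref{thm:fullseq} applies to it.

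\emph{Case 1: $s^{(n)}$ is defined for every $n$.} By Proposition~\ref{prop:transform}, $(s^\psi)^{(n)}$ is defined for every $n \geq 0$ and equals $(s^{(n-1)})^\psi$. Theorem~\ref{thm:fullseq} then gives $\widetilde{s^\psi} = \lim_n (s^\psi)^{(n)} = \lim_n (s^{(n-1)})^\psi$. We need this to equal $(\lim_n s^{(n-1)})^\psi = (\tilde{s})^\psi$. This holds because the $s^{(n)}$ are nested prefixes whose lengths tend to infinity (Proposition~\ref{prop:prefix} and the length argument in Case~1 of Theorem~\ref{thm:fullseq}). Moreover, $\psi$ preserves prefixes and the value at index $m$ of $t^\psi$ depends only on $t_{\lfloor m/2\rfloor}$, so $\psi$ commutes with such termwise limits.

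\emph{Case 2: some completion of $s$ is undefined.} Let $i$ be the least index with $s^{(i)}$ undefined. By Proposition~\ref{prop:transform}, $(s^\psi)^{(n)}$ is defined exactly when $s^{(n-1)}$ is, so the least undefined completion of $s^\psi$ is at index $i+1$. Here we use that $(s^\psi)^{(0)}$ is always defined, which handles the case $i=0$. The same proposition gives $(s^\psi)^{(i)} = (s^{(i-1)})^\psi$ and $b_{i+1}^{\,s^\psi} = b_i^{\,s}$; for $i=0$ the first identity reads $(s^\psi)^{(0)} = s^\psi$. Theorem~\ref{thm:fullseq} then yields
\[
\widetilde{s^\psi} = (s^{(i-1)})^\psi \circ (T_{i+1})^{\lfloor b_i^{\,s}\rfloor} \circ (T_{i+2})^\infty .
\]
Rewriting $T_{i+1}$ as $(T_i)^\psi$ and $T_{i+2}$ as $(T_{i+1})^\psi$, and pulling $\psi$ outside the concatenation, this becomes
\[
\widetilde{s^\psi} = \bigl(s^{(i-1)} \circ (T_i)^{\lfloor b_i^{\,s}\rfloor} \circ (T_{i+1})^\infty\bigr)^\psi = (\tilde{s})^\psi .
\]

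The only delicate points are bookkeeping. First, concatenation with an infinite-support final factor $(T_{i+1})^\infty$ must commute with $\psi$. This follows by writing $s\circ t = s + t^{\xrightarrow{\ell(s)}}$ and using linearity together with the shift rule $\big(t^{\xrightarrow{k}}\big)^\psi = (t^\psi)^{\xrightarrow{2k}}$, which holds for any $t$. Second, the index shift in Proposition~\ref{prop:transform} must align correctly at $n=0$. Neither step should be a real obstacle.
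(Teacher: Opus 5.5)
Your proposal is correct and follows essentially the same route as the paper: you split according to whether every completion of $s$ exists, apply Theorem~\ref{thm:fullseq} to both $s$ and $s^\psi$, and use Proposition~\ref{prop:transform}, Lemma~\ref{lem:thueexpansions}, and the compatibility of $\psi$ with concatenation, powers, and termwise limits to identify the two descriptions. Your extra bookkeeping (the index alignment at $n=0$, $\psi$ commuting with termwise limits, and concatenation with an infinitely supported final factor) makes explicit what the paper's proof leaves implicit.
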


\begin{proof}
    We split into cases depending on whether $s$ has $n$-completions for all $n$.

    Case 1: For all nonnegative $n$, the completion $s^{(n)}$ is defined. By the first half of Theorem~\ref{thm:fullseq}, this means that $\tilde{s} = \lim_{n \rightarrow \infty} s^{(n)}$, so $(\tilde{s})^\psi = \lim_{n \rightarrow \infty} \big(s^{(n)}\big)^\psi$. By Proposition~\ref{prop:transform}, the completion $\big(s^\psi\big)^{(n+1)}$ is defined for all nonnegative integers $n$ and equals $\big(s^{(n)}\big)^\psi$, so $\lim_{n \rightarrow \infty} \big(s^{(n)}\big)^\psi = \lim_{n \rightarrow \infty} \big(s^\psi\big)^{(n+1)} = \lim_{n \rightarrow \infty} \big(s^\psi\big)^{(n)}$, which equals $\widetilde{s^\psi}$ by the first half of Theorem~\ref{thm:fullseq}.

    Case 2: There exists $n$ such that $s$ does not have an $n$-completion. By Proposition~\ref{prop:transform}, this is equivalent to the $(n+1)$-completion of $s^\psi$ being undefined. Therefore, if $i$ is the smallest integer such that $s$ does not have an $i$-completion, it is also the smallest integer such that $s^\psi$ does not have an $(i+1)$-completion. By the second half of Theorem~\ref{thm:fullseq} applied to the sequences $s$ and $s^\psi$, we have
    \begin{align*}
        (\tilde{s})^\psi &= \big(s^{(i-1)} \circ (T_i)^{\lfloor b_i^{\,s} \rfloor} \circ (T_{i+1})^\infty\big)^\psi, \\
        \widetilde{s^\psi} &= \big(s^\psi\big)^{(i)} \circ (T_{i+1})^{\lfloor b_{i+1}^{\,s^\psi} \rfloor} \circ (T_{i+2})^\infty.
    \end{align*}
    By Proposition~\ref{prop:transform}, Lemma~\ref{lem:thueexpansions}, and the properties of $\psi$ given at the beginning of the section, the right-hand sides of these two equations are equal. Therefore, the left-hand sides are also equal, proving the claim.
\end{proof}

As a special case of this statement, we recover the result of Cooper and Dutle \cite{CD}, proved earlier in Section~\ref{sec:ComputingAllTheShots}. Letting $s = T_0$, we have $s^\psi = T_1$. Since $s^\psi$ is a fair extension of $s$, we have $\tilde{s}=\widetilde{s^\psi} = (\tilde{s})^\psi$. Since $\tilde{s}$ starts with $-1$ (because $s = T_0$ is a prefix of $\tilde{s}$) and is invariant under the Thue--Morse morphism, it must be the sequence $T_\infty$.

\section{When Is It Periodic?} \label{sec:WhenIsItPeriodic}

In this section, we use all of the machinery developed above in order to prove the main theorem. For convenience we restate the main theorem here.

\mainthm*

This section will build up to a proof of the following statement. This claim looks quite different from the main theorem, but Theorem~\ref{thm:fullseq} bridges the two.

\begin{theorem}
\label{thm:restated}
For a sign sequence $s$ that is not a prefix of either $T_\infty$ or $-T_\infty$, the completion $s^{(2\ell(s)+1)}$ is undefined.
\end{theorem}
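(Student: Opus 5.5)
I will prove Theorem~\ref{thm:restated} by strong induction on $\ell(s)$, splitting according to whether $s$ lies in the image of the Thue--Morse morphism. The key tool is Proposition~\ref{prop:transform}. It says that $(s^\psi)^{(n)}$ is defined exactly when $s^{(n-1)}$ is defined, and this lets completion-index bounds transfer between $s$ and $s^\psi$.

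\emph{Case 1: $s$ is an image.} Suppose $\ell(s)$ is even and $s = t^\psi$ for a sign sequence $t$, i.e.\ $s_{2k+1} = -s_{2k}$ for all $k$. Then $\ell(t) = \ell(s)/2$. Moreover, $t$ is not a prefix of $\pm T_\infty$: otherwise, since $\psi$ preserves prefixes and $(T_\infty)^\psi = T_\infty$ by Equation~\eqref{eq:psithue}, $s$ would be a prefix of $\pm T_\infty$. By induction, $t^{(2\ell(t)+1)}$ is undefined. Proposition~\ref{prop:transform} then makes $s^{(2\ell(t)+2)} = s^{(\ell(s)+2)}$ undefined. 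Since completions are defined sequentially (an undefined $s^{(m)}$ forces every later one to be undefined), $s^{(2\ell(s)+1)}$ is undefined provided $\ell(s)+2 \le 2\ell(s)+1$, i.e.\ $\ell(s)\ge 1$. Nonzero $s$ gives this.

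\emph{Case 2: $s$ is not an image.} Here either $\ell(s)$ is odd, or some pair $(s_{2k}, s_{2k+1})$ is $AA$ or $BB$. I want a direct parity or divisibility argument showing that some block count $b_m^{\,s}$ with $m \le 2\ell(s)+1$ fails to be an integer.

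The natural idea is to suppose all completions up to $s^{(N)}$, $N = 2\ell(s)+1$, are defined and derive a contradiction. In that situation $u = s^{(N)}$ is a sign sequence with $\nu(u) > N$, so $G_u$ is divisible by $(1-x)^{N+1}$. Write $u = s \circ w$, where $w$ is the concatenation of the blocks $(T_n)^{b_n}$. For $n \ge 1$ each block has even length and alternating pairs, so the non-image structure of $s$ propagates: the whole of $u$ is then not a $\psi$-image. Now I would work modulo $2$. Since $G_{t^\psi}(x) = (1-x)G_t(x^2)$, a Littlewood polynomial $G_u$ with $(1-x)^{N+1} \mid G_u$ should be analyzed via $G_u(x) \equiv (1+x)^{\ell(u)-1}+\dots$ over $\mathbb{F}_2$. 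Equation~\eqref{eq:coeffmod2} is exactly this mod-$2$ information: $c_i(u) \equiv \binom{\ell(u)}{i+1}$.

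\emph{Main obstacle.} Parity of $\binom{\ell}{i+1}$ alone cannot distinguish images from non-images, since it depends only on length. So I expect to need a $2$-adic refinement: the coefficients $c_i$ modulo higher powers of $2$, compared against the divisor $2^{i(i-1)/2}$ in the block count. My first attempt is this. Decompose $G_u(x) = E(x^2) + xO(x^2)$ into even and odd parts. Being an image means $E = -O$, so in a non-image $E+O$ is a nonzero polynomial $D$ whose coefficients lie in $\{0,\pm2\}$, supported on the positions of the offending pairs, all within the first $\ell(s)$ terms. I would then rewrite $F_u$ through the substitution $y = 2x - x^2$ as in Equation~\eqref{eq:transformedfairpoly}, obtaining
\[F_u(x) = xF_{E'}(2x-x^2) + (\text{term from } D).\]
The first summand has block counts behaving like those of a shorter sequence. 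The second summand's contribution to $c_m$ has $2$-adic valuation at most roughly $m+\log_2$ of a bounded quantity, and this is smaller than $m(m-1)/2$ once $m$ is around $2\ell(s)+1$. That forces a non-integral block count by index $2\ell(s)+1$. Making this valuation estimate precise, and checking it also covers odd $\ell(s)$ (pad the last pair with a zero), is where I expect the real work to lie.
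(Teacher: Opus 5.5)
Your Case~1 is correct and is exactly the reduction the paper uses: $t$ inherits the non-prefix hypothesis, induction kills $t^{(2\ell(t)+1)}$, and Proposition~\ref{prop:transform} then kills $s^{(\ell(s)+2)}$. The gap is Case~2, which contains all the difficulty and is only a program.

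Part of Case~2 is easy and is misfiled. Suppose $\ell(s)=n$ is odd and no pair $(s_{2k},s_{2k+1})$ is $AA$ or $BB$. Then $c_0(s)=s_{n-1}$, so $s^{(0)}=s\circ(T_0)^{c_0(s)}$ merely appends $-s_{n-1}$. Hence $s^{(0)}=r^\psi$ with $\ell(r)=(n+1)/2<n$. Since $c_0(s^{(0)})=0$, the sequences $s$ and $s^{(0)}$ have the same $i$-completions for all $i\ge 0$. Your Case~1 argument applied to $s^{(0)}$ then shows $s^{(n+3)}$ is undefined, and $n+3\le 2n+1$ because $n\ge 3$. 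So the right split is by whether an offending pair exists, not by whether $s$ itself is a $\psi$-image.

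The offending-pair case is the real content, and your sketch does not handle it. First, your claim that $D=E+O$ is supported in the first $\ell(s)$ terms is false. The block $(T_0)^{b_0^{\,s}}$ appends $|c_0(s)|$ equal shots and creates new offending pairs; for $s=AA$ one gets $s^{(0)}=AABB$. The support is only inside $s^{(0)}$, whose length you must bound, e.g.\ by $\ell(s)+2k+3$ where $k$ indexes the \emph{last} offending pair. Second, $\nu(u)>N$ forces the two summands of $F_u$ to cancel coefficientwise up to degree $N$. So a low $2$-adic valuation for the $D$-part yields nothing unless you also control the $xF_{E'}(2x-x^2)$ part, and your sketch gives no such control.

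The missing idea is a device that isolates a single pair modulo $4$; this is the paper's Lemma~\ref{lem:nobigcompletion}. Take $q=2^a-1$ with $2^a$ the least power of $2$ that is at least $\ell(s)+2$, so $q\le 2\ell(s)+1$. Suppose $h=s^{(q)}$ exists, and add $c_q$ of $h$ shifted by $q-2k-1$ and by $q-2k$. This sum is $0$ because $\nu(h)>q$. Its generating function is $x^{q-2k-1}(1+x)G_h(x)$, whose odd-degree coefficients are exactly the pair sums $h_{2g}+h_{2g+1}$, i.e.\ your $D$. Modulo $4$, Lemma~\ref{lem:binomialfacts} kills every even-degree term, since $2^a\mid\binom{i}{q}$ for even $i$. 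Among odd degrees, it leaves odd binomial weight only when $g\equiv k\pmod{2^{a-1}}$. The pair sums vanish past $s^{(0)}$, because the fair extension then proceeds in $\pm T_1$ blocks. The length bound on $s^{(0)}$ then leaves only $g=k$. The sum is therefore $\equiv s_{2k}+s_{2k+1}\equiv 2\pmod 4$, a contradiction.
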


Using this theorem, we can prove the main theorem as follows. Let $s$ be a shooting sequence of length $n$ that satisfies the hypothesis of the main theorem, and let $t$ be the corresponding integer sequence. By Theorem~\ref{thm:restated}, the sequence $t$ does not have a $(2n+1)$-completion, so there is a smallest nonnegative integer $i$ such that $t$ does not have an $i$-completion, and $i \leq 2n+1$. From Theorem~\ref{thm:fullseq}, we get that $\tilde{t}$ can be written as $u \circ (T_{i+1})^\infty$ for a sign sequence $u$. Since $T_{i+1}$ has length $2^{i+1}$, this must be a period of $\tilde{t}$, so the least period of $\tilde{t}$ must divide it. However, $2^i$ is not a period of $\tilde{t}$ due to the two halves of $T_{i+1}$ being complementary, so the least period does not divide $2^i$. Therefore, the least period is $2^{i+1} \leq 2^{2n+2} = 4^{n+1}$. Since $T_{i+1}$ is a prefix of the Thue--Morse sequence, the main theorem follows. 

Since $t$ was an arbitrary sequence satisfying the hypothesis of the main theorem, Theorem~\ref{thm:restated} also tells us the smallest period that a shooting sequence generated by fairly extending a prefix can have. The repeating block of $\tilde{t}$ is $T_{i+1}$, so $2^{i+1}$ is the least period of $\tilde{t}$. By Lemma~\ref{lem:2completion}, the sequence $t$ has a $2$-completion, so $i \geq 3$, implying that this period is at least $16$. The shooting sequence beginning with $AA$, computed in Section~\ref{sec:ComputingAllTheShots}, has least period exactly $16$, so no eventually periodic duel that is fair after an initial prefix can have a lower least period.

In addition, Theorem~\ref{thm:restated} proves that under its hypotheses, Theorem~\ref{thm:fullseq}'s construction always terminates and yields a closed form for the infinite fair extension. When instead $s$ is a prefix of $T_\infty$ or $-T_\infty$, its infinite fair extension is $T_\infty$ or $-T_\infty$ respectively~\cite{CD}, which is not eventually periodic~\cite{AS}; so, by Theorem~\ref{thm:fullseq}, the sequence $s^{(n)}$ is defined for every $n$. In other words, $s$ has an $n$-completion for every $n$ exactly when $s$ is a prefix of $T_\infty$ or $-T_\infty$.

To prove Theorem~\ref{thm:restated}, we first consider what happens when $s$ has a pair of terms $s_{2k} = s_{2k+1} = \pm 1$, proving a lemma showing that such a sequence cannot have a $q$-completion for a specific $q \leq 2\ell(s) + 1$. This proves one case of the theorem. In the case where no such pair exists, we show that $s$, after possibly appending one term, is equal to the Thue--Morse morphism applied to a shorter sequence; applying the theorem inductively to this shorter sequence, we get that $s$ does not have a $(2\ell(s)+1)$-completion in this case either.

Our proof of the first lemma described above will require evaluating some fairness coefficients modulo $4$, so we start by examining binomial coefficients modulo powers of 2. We use the following lemma, both of whose parts follow from Kummer's theorem \cite{Kummer, Granville}. For context, Kummer's theorem states that for a prime $p$, the largest integer $i$ such that $p^i$ divides $\binom{n}{m}$ is equal to the number of carries performed when adding $n-m$ to $m$ in base $p$. The first part of the lemma also follows from Lucas's theorem \cite{Granville}.

\begin{lemma}\label{lem:binomialfacts}
    Let $q = 2^a - 1$ for some nonnegative integer $a$ and let $n$ be a nonnegative integer. Then:
    \begin{enumerate}
        \item $\binom{n}{q}$ is odd if and only if $n \equiv q \pmod{q+1}$.
        \item If $n$ is even, then $\binom{n}{q} \equiv 0 \pmod{q+1}$.
    \end{enumerate}
\end{lemma}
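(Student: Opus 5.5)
Both parts of Lemma~\ref{lem:binomialfacts} can be read off from Kummer's theorem with $p=2$, by counting carries in the binary addition $(n-q)+q$, where $q=2^a-1$ is the all-ones string $11\cdots1$ of length $a$.

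For part~1, $\binom{n}{q}$ is odd exactly when this addition produces no carries. Since $q$ has a $1$ in each of the lowest $a$ binary positions, a carry is avoided precisely when $n-q$ has $0$ in each of those positions. That happens if and only if $n-q\equiv 0 \pmod{2^a}$, that is, $n\equiv q \pmod{q+1}$. The same conclusion also follows from Lucas's theorem: $\binom{n}{q}$ is odd if and only if every binary digit of $q$ is at most the corresponding digit of $n$, which here means the lowest $a$ bits of $n$ are all $1$.

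For part~2, suppose $n$ is even. When $n<q$ the coefficient is $0$ and there is nothing to prove, so assume $n\ge q$. The case $a=0$ is trivial since $q+1=1$, so assume $a\ge 1$. Then $q$ is odd and $n$ is even, so $m:=n-q$ is odd, and its bit $0$ equals $1$. Adding $m$ and $q$ at bit $0$ gives $1+1$, which produces a carry. I then induct on the bit positions $1,\dots,a-1$. At each such position, $q$ contributes a $1$ and there is an incoming carry, so the column sum is at least $2$ and a new carry is produced regardless of the digit of $m$. Hence there are at least $a$ carries. By Kummer's theorem $2^a\mid\binom{n}{q}$, that is, $\binom{n}{q}\equiv 0\pmod{q+1}$.

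The only delicate point is the carry-chain argument in part~2: it hinges on the lowest bit of $m$ being $1$, which follows from the parity assumption on $n$, and it also requires treating the degenerate cases $a=0$ and $n<q$ separately.
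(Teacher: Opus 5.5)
Your proof is correct and takes the same route the paper indicates: Kummer's theorem on base-$2$ carries for both parts, with Lucas's theorem as an alternative for part 1. One small omission is that your Kummer argument for part 1 tacitly assumes $n \ge q$. For $n<q$ one has $\binom{n}{q}=0$ and $n\not\equiv q \pmod{q+1}$, and your Lucas version already covers that case.
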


Using the facts above, we can now show that if $s$ has a pair of elements $s_{2k} = s_{2k+1} = \pm 1$, then it does not have a $q$-completion, where $q = 2^a - 1$ with $2^a$ being the smallest power of 2 that is at least $\ell(s)+2$. The idea is to ``highlight" the pair $s_{2k}$, $s_{2k+1}$ using fairness coefficients. Supposing that $s^{(q)}$ exists, it has fairness exponent greater than $q$, so all its shifts have $q^\text{th}$ fairness coefficient $0$. We add the $q^\text{th}$ fairness coefficients of two of these shifts differing by one position; this sum equals 0. Expanding this out using Equation \eqref{eq:coefficients} gives a sum where each binomial coefficient is multiplied by the sum of a pair of terms, such as $s_{2k} + s_{2k+1}$. When reducing this sum modulo 4 using Lemma~\ref{lem:binomialfacts}, only the pairs whose corresponding binomial coefficients are odd survive, and these pairs are spaced $q+1$ terms apart. The first of these pairs is $s_{2k}$, $s_{2k+1}$, and $q$ is large enough that all of the other pairs occur after the initial $s^{(0)}$, forcing them to be either a $T_1$ or a $-T_1$, which makes their terms sum to 0. Therefore, the sum of these highlighted terms is $s_{2k} + s_{2k+1} = \pm2$, so the sum of fairness coefficients is congruent to $2$ modulo $4$, contradicting the fact that it equals 0.

Now we state and prove this claim formally.
\begin{lemma}
\label{lem:nobigcompletion}
    Let $s$ be a sign sequence of length $n$. Suppose for some nonnegative integer $k$, we have $s_{2k} = s_{2k+1} = \pm 1$. Let $2^a$ be the smallest power of $2$ that is at least $n+2$, and let $q = 2^a - 1$. Then $s^{(q)}$ is undefined.
\end{lemma}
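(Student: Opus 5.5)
We argue by contradiction, following the ``highlighting'' idea described before the statement. Suppose $u := s^{(q)}$ is defined. By Proposition~\ref{prop:prefix}, $\nu(u) > q$, and by Equation~\eqref{eq:shiftgf} every shift $u^{\xrightarrow{m}}$ also has fairness exponent greater than $q$. Hence
\[
c_q\big(u^{\xrightarrow{m}}\big) + c_q\big(u^{\xrightarrow{m+1}}\big) = 0 \qquad\text{for every } m \ge 0 .
\]
Expanding with Equation~\eqref{eq:coefficients} and reindexing the second sum gives, up to the global sign $(-1)^q$,
\[
\sum_{i \ge 0} \binom{i+m}{q}\,(u_i + u_{i-1}), \qquad u_{-1} := 0 .
\]
So each binomial coefficient multiplies the sum of a consecutive pair.

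We then reduce modulo $4$, noting that $q+1 = 2^a \ge n+2 \ge 4$.

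\textbf{Step 1 (choice of $m$).} Choose $m$ even with $m \equiv q-2k-1 \pmod{q+1}$; this is possible because $q-2k-1$ is even and $q+1$ is even.

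\textbf{Step 2 (even $i$).} For even $i$, the number $i+m$ is even, so Lemma~\ref{lem:binomialfacts}(2) gives $\binom{i+m}{q} \equiv 0 \pmod 4$. This kills those pairs. It also kills the two boundary terms $i=0$ and $i=\ell(u)$, where the pair sum is odd; here $\ell(u)$ is even because $c_0(u)=0$ and Equation~\eqref{eq:coeffmod2} applies.

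\textbf{Step 3 (odd $i$, even binomial).} For odd $i$ with $0<i<\ell(u)$, the pair sum $u_i+u_{i-1}$ lies in $\{0,\pm2\}$, so it is even. If the binomial coefficient is also even, the product vanishes modulo $4$.

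\textbf{Step 4 (surviving terms).} By Lemma~\ref{lem:binomialfacts}(1), the only surviving terms are those with $i+m \equiv q \pmod{q+1}$. By the choice of $m$, these are exactly $i = 2k+1+t(q+1)$. Each contributes $\pm(u_{i-1}+u_i)$, which is $\pm 2$ when the pair $(u_{i-1},u_i)$ is equal and $0$ otherwise.

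\textbf{Step 5 (pairs after $s^{(0)}$).} Beyond $s^{(0)}$, the sequence $u$ is a concatenation of copies of $\pm T_j$ with $j\ge1$. Each copy has even length, and $\ell(s^{(0)})$ is even, so every even-aligned pair there is $\pm T_1$ and sums to $0$. Hence the total is congruent modulo $4$ to $2N$, where $N$ is the number of equal even-aligned pairs of $s^{(0)}$ in the residue class of $2k$ modulo $q+1$.

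It therefore suffices to show that $N$ is odd. Then the sum is $\equiv 2 \pmod 4$, contradicting the fact that it equals $0$.

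This parity step is the main obstacle. The pair $(s_{2k},s_{2k+1})$ is one such equal pair. However, $s^{(0)}$ extends $s$ by $|c_0(s)|$ copies of a single $\pm T_0$, and these can themselves create equal aligned pairs up to length $\ell(s^{(0)}) \le 2n$. Since the spacing $q+1 \ge n+2$ exceeds half of $2n$, each residue class contains at most two equal pairs of $s^{(0)}$, and the danger is exactly two.

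My plan here has two parts. First, I would not insist on the given $k$. Instead I would pick the residue class cleverly, for instance the class of the last equal aligned pair of $s^{(0)}$. Then no later pair in the class is equal, and at most one earlier pair, lying inside $s$ at distance $q+1$, can be.

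Second, I would rule out that earlier pair. To do so, I would compare the length of the constant tail of $s^{(0)}$ with $c_0(s)$, using that $|c_0(s)|$ counts the imbalance of $A$'s and $B$'s in $s$. I would also use $q+1 \ge n+2$. If that fails, I would vary the two free parameters (the class and the parity of $m$) to find a class containing an odd number of equal pairs.
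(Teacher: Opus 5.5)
\textbf{There is a genuine gap.} Your reduction through Step 5 is sound. With $m$ even and $m\equiv q-2k-1 \pmod{q+1}$, the sum is congruent modulo $4$ to twice the number of equal even-aligned pairs of $s^{(0)}$ that start in the class of $2k$ modulo $q+1$. Pairs beyond $s^{(0)}$ cancel for the reason you give. Since $2k+1<n<q+1$, the index $2k$ is the least element of its class. So the only danger is a later equal pair, and that can only come from the constant $\pm T_0$ tail of $s^{(0)}$.

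However, this parity step is the entire content of the lemma, and your plan does not establish it. Your primary choice, the class of the last equal aligned pair of $s^{(0)}$, actually fails. Take $s=AAAAAA$, so $n=6$, $q=7$, $c_0(s)=-6$ and $s^{(0)}=AAAAAABBBBBB$. The last equal aligned pair starts at index $10$, and the pair starting at index $10-8=2$ is $AA$, also equal. That class therefore contains two equal pairs and gives $0 \pmod 4$, and no comparison of the tail with $c_0(s)$ can rule this out. The class of $0$ (the choice $k=0$) fails the same way, via the pairs at $0$ and $8$. Your fallback of ``varying the class and the parity of $m$'' is not an argument. Flipping the parity of $m$ in fact breaks Step 5: the surviving pairs become odd-aligned, and these need not cancel after $s^{(0)}$ (e.g.\ $T_2=ABBA$ contains the odd-aligned pair $BB$).

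\textbf{The missing idea} is to take $k$ to be the \emph{largest} index with $s_{2k}=s_{2k+1}=\pm1$, meaning an equal pair inside $s$ itself rather than inside $s^{(0)}$. Then use this maximality to bound the tail.
\begin{itemize}
\item Every aligned pair of $s$ after index $2k+1$ sums to $0$, except possibly one final unpaired term. Hence $|c_0(s)|\le\sum_i|s_{2i}+s_{2i+1}|\le 2(k+1)+1$, so $\ell(s^{(0)})\le n+2k+3$.
\item Combine this with $q+1=2^a\ge n+2$ and the evenness of $\ell(s^{(0)})$. If $n$ is even, then $\ell(s^{(0)})\le n+2k+2$. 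If $n$ is odd, then $2^a\ge n+3$. Either way you get $2k+q+1\ge \ell(s^{(0)})$.
\item So the next member of the class of $2k$ already lies in the cancelling region. The pair at $2k$ is the unique survivor, which gives $2 \pmod 4$.
\end{itemize}
In the example this is the class of $4$, which contains only the pair at index $4$. Taking $m=q-2k-1$ exactly, as the paper does, makes $\binom{q}{q}=1$ the coefficient of that pair, but any $m$ in your congruence class works equally well.
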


\begin{proof}
    We first note that $s$ is nonempty, so $2^a \geq n+2 \geq 3$, giving us that $a \geq 2$. Let $k$ be the largest index such that $s_{2k} = s_{2k+1} = \pm 1$. (Such a largest index exists due to the hypothesis of the lemma and the fact that $s$ has finite support.) This is equivalent to $k$ being the largest nonnegative integer such that $s_{2k+1} \neq 0$ and $s_{2k+1} \neq -s_{2k}$.
    
    Consider the sequence $s^{(0)}$. Since $s^{(0)}$ has a positive fairness exponent (by Proposition~\ref{prop:prefix}), its $0^\text{th}$ fairness coefficient is zero, so $s^{(0)}$ must have equally many 1s and $-1$s. This implies that its length is even, so we write $\ell(s^{(0)}) = 2N$. Also, $s^{(0)}$ equals $s \circ (T_0)^{b_0^{\,s}} = s \circ (T_0)^{c_0(s)}$, so its length is $\ell(s) + \ell(T_0)|c_0(s)| = n + |c_0(s)|$. Since $c_0(s)$ is the sum of the terms of $s$, grouping terms into pairs, we have that \[|c_0(s)| = \left|\sum_{i=0}^{\infty}(s_{2i}+s_{2i+1})\right| \leq \sum_{i=0}^{\infty}|s_{2i}+s_{2i+1}|.\] Since $k$ is the largest nonnegative integer such that $s_{2k+1} \neq 0$ and $s_{2k+1} \neq -s_{2k}$, for every $i > k$, we have either $s_{2i+1} = 0$ or $s_{2i+1} = -s_{2i}$. If $s_{2i+1} = -s_{2i}$, then the term $|s_{2i}+s_{2i+1}|$ vanishes. If $s_{2i+1} = 0$ but $s_{2i+1} \neq -s_{2i}$, then since $s$ corresponds to a finite shooting sequence, $s_{2i+1} = 0$ implies that $n \leq 2i+1$ and $s_{2i} \neq 0$ implies that $n > 2i$, so $n=2i+1$ and $|s_{2i}+s_{2i+1}| = |s_{2i}| = 1$. In particular, $|s_{2i}+s_{2i+1}|$ is nonzero for at most one value of $i > k$, and this value gives $|s_{2i}+s_{2i+1}| = 1$. For $0 \leq i \leq k$, the term $|s_{2i}+s_{2i+1}|$ is at most 2. This implies that \[|c_0(s)| \leq \sum_{i=0}^{\infty}|s_{2i}+s_{2i+1}| \leq 2(k+1)+1 = 2k+3,\] so $2N = \ell(s^{(0)}) \leq n+2k+3$.

    After the shots taken according to the sequence $s^{(0)}$, the next 2 shots taken correspond to either the sequence $T_1$ or $-T_1$ by Proposition~\ref{prop:manyshots} and the fact that $s^{(0)}$ has a positive fairness exponent. By Corollary~\ref{cor:fairexp}, concatenating $T_1$ or $-T_1$ to $s^{(0)}$ cannot make the fairness exponent drop below 1. This means that the pair of shots taken after this also corresponds to either $T_1$ or $-T_1$, and the fairness exponent after concatenating this pair is at least 1. Continuing like this, we see that $\tilde{s}$ is $s^{(0)}$ followed by infinitely many shifted copies of either $T_1$ or $-T_1$. For all integers $m \geq N$, the $(m-N+1)^\text{st}$ shifted copy of $T_1$ or $-T_1$ occurs at terms indexed by $2m$ and $2m+1$, so 

    \begin{equation} \label{eq:pairing}
    \tilde{s}_{2m} = -\tilde{s}_{2m+1} \text{ for all } m \geq N.
    \end{equation}

    Now suppose, for contradiction, that $s^{(q)}$ is defined, and let $h = s^{(q)}$ and $M = \ell(h)$. By Proposition~\ref{prop:prefix}, the sequence $h$ has fairness exponent greater than $q$ and is a prefix of $\tilde{s}$ (due to being a finite fair extension of $s$). Since $c_0(h) = 0$, we have that $h$ contains an equal number of 1s and $-1$s, so $\ell(h) = M$ is even. We also note that $M = \ell\big(s^{(q)}\big) \geq \ell\big(s^{(0)}\big) = 2N$ because $s^{(0)}$ is a prefix of $s^{(q)}$.

    Let the sequence $h^{\xrightarrow{q-2k-1}}$ be denoted by $t$. The reason for the shift $q-2k-1$ is that it makes the contribution of the distinguished pair $h_{2k},h_{2k+1}$ particularly simple in the $q^\text{th}$ fairness coefficient, as the argument below shows. This shift is nonnegative since $q \geq n + 1 > 2k+1$. Since $F_t(x) = (1-x)^{q-2k-1}F_h(x)$, the polynomials $F_h(x)$ and $F_t(x)$ have the same lowest-degree term, so $t$ has the same fairness exponent as $h$. Similarly, $t^{\xrightarrow{1}}$ has the same fairness exponent as $h$. Since this fairness exponent is greater than $q$, we have that $c_{q}(t) = c_{q}(t^{\xrightarrow{1}}) = 0$.

    We now derive a contradiction by proving that $-\big(c_{q}(t) + c_{q}(t^{\xrightarrow{1}})\big)$ is congruent to 2 modulo 4. Since $q$ is odd, using Equation~\eqref{eq:coefficients}, we get
    \[-\big(c_{q}(t) + c_{q}(t^{\xrightarrow{1}})\big) = \sum_{i=q}^\infty\binom{i}{q}\left((t^{\xrightarrow{1}})_i+t_i\right) \equiv\sum_{g=k}^{M/2-1}\binom{2g+q-2k}{q}\bigl(h_{2g}+h_{2g+1}\bigr)\pmod 4. \]

    In the congruence, we reindex $i$ to $2g+q-2k$, making use of the fact that all terms with even $i$ in the first sum vanish modulo 4. This happens because $\binom{i}{q}$ vanishes modulo $q+1=2^a$ for even $i$ by the second part of Lemma~\ref{lem:binomialfacts} and $4 \mid 2^a$ since $a \geq 2$. Since $h$ has length $M$ and is a prefix of $\tilde{s}$ (by Proposition~\ref{prop:prefix}), we have that $h_i = \tilde{s}_i$ for $i < M$, so by Equation \eqref{eq:pairing}, we have
    \[\sum_{g=k}^{M/2-1}\binom{2g+q-2k}{q}\bigl(h_{2g}+h_{2g+1}\bigr) = \sum_{g=k}^{M/2-1}\binom{2g+q-2k}{q}\bigl(\tilde{s}_{2g}+\tilde{s}_{2g+1}\bigr) = \sum_{g=k}^{N-1}\binom{2g+q-2k}{q}\bigl(\tilde{s}_{2g}+\tilde{s}_{2g+1}\bigr). \]
    This helps us conclude that 
    \[-\big(c_{q}(t) + c_{q}(t^{\xrightarrow{1}})\big) \equiv \sum_{g=k}^{N-1}\binom{2g+q-2k}{q}\bigl(\tilde{s}_{2g}+\tilde{s}_{2g+1}\bigr) \pmod{4}.\]
    We have that $k \leq \frac{\ell(s)}{2} -1 \leq \frac{\ell(s^{(0)})}{2} - 1 = N-1$, so the sum contains the $g=k$ term.

    We now show that, modulo 4, every term of this sum except the $g = k$ term vanishes. For each term of the summation above, the second factor is even because $|\tilde{s}_{2g}| = |\tilde{s}_{2g+1}| = 1$. By the first part of Lemma~\ref{lem:binomialfacts} and the fact that $2^a = q+1$, the expression $\binom{2g+q-2k}{q}$ is odd precisely when $2g+q-2k \equiv q \pmod{2^a}$, which occurs when $g \equiv k \pmod{2^{a-1}}$. By the inequalities $2N \leq n + 2k + 3$ and $n + 2 \leq 2^a$, we have that $N-1 \leq \left\lfloor\frac{n+2k+3}{2}\right\rfloor-1 \leq \left\lfloor\frac{2^a+2k+1}{2}\right\rfloor-1 < k+2^{a-1}$. This means that the only term of the above sum that is not a multiple of 4 is the $g=k$ term, so, since $\ell(s) > 2k+1$, computing this term gives
    \[-\big(c_{q}(t) + c_{q}(t^{\xrightarrow{1}})\big) \equiv \binom{q}{q}(\tilde{s}_{2k} + \tilde{s}_{2k+1}) \equiv s_{2k} + s_{2k+1} \equiv 2s_{2k} \equiv 2 \pmod{4}.\] 
    This is a contradiction.
\end{proof}

Now we can finally prove Theorem~\ref{thm:restated}.

\begin{proof}[Proof of Theorem~\ref{thm:restated}]
    Let $n := \ell(s)$. We shall prove this by induction on $n$. The base cases of $n = 0$ and $n = 1$ are vacuously true since this implies that $s$ is a prefix of either $T_\infty$ or $-T_\infty$, so the conditions on $s$ given are not met.

    For the inductive step, assume that $n>1$ and that this theorem is true for sequences of length less than $n$. We will split the inductive step into two cases.

    Case 1: There exists a nonnegative integer $k$ such that $s_{2k} = s_{2k+1} = \pm 1$. In this case, let $2^a$ be the smallest power of 2 that is at least $n+2$, and let $q = 2^a - 1$. We have that $2^a$ is even and less than $2(n+2)$, so $2^a \leq 2(n+1)$, giving $q \leq 2n+1$. By Lemma~\ref{lem:nobigcompletion}, the completion $s^{(q)}$ is undefined, so the completion $s^{(2n+1)}$ is also undefined (since completions are defined recursively). This proves the inductive step for this case.

    Case 2: There is no nonnegative integer $k$ such that $s_{2k} = s_{2k+1} = \pm 1$. Equivalently, for every nonnegative integer $k$, either $s_{2k+1} = 0$ or $s_{2k+1} = -s_{2k}$. 

    Consider the sequence $s^{(0)}$. The number $c_0(s)$ can be written as $\sum_{i=0}^\infty (s_{2i} + s_{2i+1})$. If $n$ is even, then all the terms of this sum vanish, so $c_0(s) = 0$. If $n$ is odd, then only the term corresponding to $i = \frac{n-1}{2}$ does not vanish, so the sum, which is $c_0(s)$, equals $s_{n-1}$. We have that $s^{(0)} = s \circ (T_0)^{c_0(s)}$ by Definition~\ref{def:completion}. This means that if $n$ is even, then $s^{(0)} = s$, and if $n$ is odd, $s^{(0)}$ equals the sequence $t$ such that $t_n = -s_{n-1}$ and $t_k=s_k$ for $k \neq n$. In either case, $\ell(s^{(0)}) = 2\lceil\frac{n}{2}\rceil$ and $\big(s^{(0)}\big)_{2k+1} = -\big(s^{(0)}\big)_{2k}$ for every nonnegative $k$.

    By Proposition~\ref{prop:prefix}, we have $c_0\big(s^{(0)}\big) = 0$, so the $0$-completion of $s^{(0)}$ is $s^{(0)}$ itself, which is also the $0$-completion of $s$. Since each block count and completion depends only on the previous completion, induction gives that $s^{(i)}$ is defined if and only if $\big(s^{(0)}\big)^{(i)}$ is, and they are then equal.
    
    Let $r$ be the sequence such that $r_k = s_{2k}$ for every nonnegative integer $k$. Since this sequence is a finite sequence of $1$s and $-1$s that is followed by infinitely many zeros, it corresponds to a finite shooting sequence. The length of $r$ is $\left\lceil\frac{n}{2}\right\rceil$, which is strictly less than $n$ because $n > 1$. Consider the sequence $r^\psi$. By the definition of $r$, the definition of the Thue--Morse morphism, and the property $\big(s^{(0)}\big)_{2k+1} = -\big(s^{(0)}\big)_{2k}$ for every nonnegative $k$, we can see that $r^\psi = s^{(0)}$. 

    If $r$ is a prefix of $T_\infty$, then $s^{(0)} = r^\psi$ is a prefix of $(T_\infty)^\psi = T_\infty$ by Equation \eqref{eq:psithue}. Since $s$ is a prefix of $s^{(0)}$, this implies that $s$ is a prefix of $T_\infty$, violating the conditions of the theorem. Similarly, we can see that $r$ is not a prefix of $-T_\infty$. Since $r$ is a sequence that corresponds to a finite shooting sequence and is neither a prefix of $T_\infty$ nor of $-T_\infty$, the inductive hypothesis implies that $r$ does not have a $(2\ell(r)+1)$-completion. By Proposition~\ref{prop:transform}, this implies that $s^{(0)} = r^\psi$ does not have a $(2\ell(r)+2)$-completion, thereby proving that the $(2n+1)$-completion of $s^{(0)}$ is undefined since $2\ell(r)+2 \leq 2n+1$. Since $s^{(0)}$ does not have a $(2n+1)$-completion, neither does $s$. This completes the proof.
\end{proof}

\section{Final Remarks} \label{sec:FinalRemarks}

Theorem~\ref{thm:main} bounds the least period of the fair extension of a nonexceptional prefix of length $n$ by $4^{n+1}$ (where nonexceptional means that it is not a prefix of either $T_\infty$ or $-T_\infty$); how large can that least period actually be? By Theorem~\ref{thm:fullseq}, the least period is $2^{i+1}$, where $i$ is the smallest index for which the prefix has no $i$-completion, so the question can be reframed as finding the minimum length of a nonexceptional prefix with an $i$-completion. For $i \geq 0$, let $\lambda(i)$ be the smallest length of a nonexceptional sign sequence that has an $i$-completion. 

Since a prefix of $\tilde{s}$ containing $s$ has the same infinite fair extension as $s$, and hence the same least period, the maximum least period among nonexceptional prefixes of length $n$ is exactly $2^{i+2}$ for the largest $i$ with $\lambda(i) \leq n$; conversely, if $\lambda(i) > n$, then every nonexceptional prefix of length $n$ has least period at most $2^{i+1}$. (Such a largest $i$ exists for $n \geq 2$, since $\lambda(0) = 2$ and $\lambda(i) \geq i/2$ by Theorem~\ref{thm:restated}.)

By tracing through the proof of Theorem~\ref{thm:restated}, one can show a stronger bound than the theorem's $\lambda(i) \geq i/2$: namely, for $a \geq 2$, we have $\lambda(2^a-1) \geq 2^a - 3$ and $\lambda(2^a) \geq 2^a - 1$. Since $\lambda(i)$ is nondecreasing, this gives a lower bound that is at least $i/2$ for all $i \geq 4$, with the two bounds being equal precisely when $i$ is 2 below a power of 2.

One natural upper bound for $\lambda(i)$ when $i \geq 2$ is $2^{i-2}+1$: the construction for this is the prefix $T_{i-2} \circ T_0$. This is not a prefix of either $T_\infty$ or $-T_\infty$, since the prefix of $T_\infty$ of length $2^{i-2}+1$ is $T_{i-2} \circ (-T_0)$. For $k < i-2$, the $k$-completion is $T_{i-2} \circ T_{k+1}$. (This can be proved directly, by computing fairness coefficients using Lemma~\ref{lem:concatthue}.) The $(i-2)$-completion is $(T_{i-2})^2 \circ (T_{i-2})^{-2}$, the $(i-1)$-completion is $(T_{i-2})^2 \circ (T_{i-2})^{-2} \circ (T_{i-1})^{-4}$, and the $i$-completion is $(T_{i-2})^2 \circ (T_{i-2})^{-2} \circ (T_{i-1})^{-4} \circ (T_i)^{-12}$. This can be proved by iterating Proposition~\ref{prop:transform} a total of $i-2$ times, using Lemma~\ref{lem:thueexpansions} to apply the morphism to the Thue--Morse blocks. Similarly to Example~\ref{ex:AAcomputation}, the $(i+1)$-completion does not exist.

We can derive an asymptotically better upper bound using the fact that any sequence $s$ that corresponds to a nonzero finite shooting sequence equals its own $i$-completion if and only if $\nu(s) > i$. (This can be proved by induction on $i$.) Applying the multiplication construction of~\cite{BGPW} to a Littlewood polynomial of length $240$ with a root of multiplicity $10$ at $1$ (also from~\cite{BGPW}) yields, for every $i$, a sign sequence that is neither a prefix of $T_\infty$ nor of $-T_\infty$, has fairness exponent greater than $i$, and has length $240^{\lceil (i+1)/10 \rceil}$. This then yields that $\lambda(i) \leq 240^{\lceil (i+1)/10 \rceil}$, which is eventually better than the above construction since $240^{1/10} < 2$.

We believe that the actual growth rate of $\lambda(i)$ is subexponential but superlinear; the following heuristic suggests that it grows as $\Theta(i^3)$. For a nonexceptional prefix $s$, suppose that, conditional on $s^{(i-1)}$ existing, the residue class of $c_i(s^{(i-1)})$ modulo $2^{i(i-1)/2}$ is chosen uniformly at random. If $s^{(i-1)}$ exists, then $b_i^{\,s} = \frac{c_i(s^{(i-1)})}{2^{i(i-1)/2}}$ would be an integer with probability $2^{-i(i-1)/2} = 2^{-\binom{i}{2}}$, so $s^{(i)}$ would exist with that probability. Multiplying over $j \leq i$, the chance that $s$ has an $i$-completion at all would be \[\prod_{j=1}^{i} 2^{-\binom{j}{2}} = 2^{-\binom{i+1}{3}}.\] This implies that the number of nonexceptional sequences of length $n$ with an $i$-completion is roughly $(2^n - 2)2^{-\binom{i+1}{3}}$, so the model would imply that $\lambda(i)$ is roughly the smallest $n$ with $(2^n - 2)2^{-\binom{i+1}{3}} \geq 1$; that is, $\lambda(i) \approx \binom{i+1}{3} = \Theta(i^3)$.

The uniform-coefficient assumption is not literally correct: by Lemma~\ref{lem:2completion}, every sequence has a $2$-completion, whereas the model assigns this a probability of $\frac{1}{2}$. This is caused by modular restrictions on the block counts. This prediction should be treated cautiously for another reason as well: for the Littlewood polynomial question above, which corresponds to finding the shortest nonzero sequence whose $i$-completion is itself, a counting heuristic predicts a length of $\Theta(i^2 \log i)$~\cite{BGPW}. Boyd's superpolynomial lower bound shows that this cannot describe the true asymptotic growth~\cite{Boyd}. Boyd's result does not refute our cubic prediction for $\lambda(i)$, but it illustrates how arithmetic restrictions can invalidate a counting heuristic in a closely related problem. At present we know only that $\lambda(i) = \Omega(i)$ and $\lambda(i) = O(240^{i/10})$; in terms of periods, the maximum least period among nonexceptional prefixes of length $n$ lies between $\Omega(n^{10/\log_2 240}) = \Omega(n^{1.2647\ldots})$ and $2^{O(n)}$. This leads to a natural open question: what is the asymptotic growth of $\lambda(i)$? Our counting heuristic suggests $\lambda(i)=\Theta(i^3)$, corresponding to a maximum least period of $2^{\Theta(n^{1/3})}$, but the influence of the arithmetic restrictions on this prediction remains unclear.

\section*{AI usage}

We used ChatGPT 5.6, Claude Fable 5.1, and ChatGPT 6 to provide feedback on the draft, do literature searches, and clean up the bibliography. We also used Gemini and Perplexity to come up with the notation for early drafts of this paper.

\section*{Acknowledgments}

We are grateful to the MIT PRIMES-USA program for supporting this research and to Andrey Khesin for having a helpful discussion about the topic, as well as sharing a new formulation of the duel problem. We are also grateful to Thomas R\"ud for reviewing our paper and for his helpful suggestions.

\end{document}